\pgfplotsset{compat=newest} 
\pgfplotsset{plot coordinates/math parser=false} 
\renewcommand{\hat}{\widehat}
\newcounter{mymac@matlab}
\newcommand{\matlab}{MATLAB%
	\ifnum\value{mymac@matlab}<1%
	\textsuperscript{\textregistered}%
	\setcounter{mymac@matlab}{1}%
	\fi%
}
\theoremstyle{plain}
\theoremstyle{remark}
\newcommand{\T}{\mathrm{T}}
\newcommand{\bSigma}{\boldsymbol{\Sigma}}
\newcommand{\bcG}{\boldsymbol{\cG}}
\newcommand{\bcZ}{\boldsymbol{\cZ}}
\newcommand{\bcX}{\boldsymbol{\cX}}
\newcommand{\vect}{\mathrm{vec}}
\newcommand{\Ll}{\mathrm{l}}
\newcommand{\p}{\mathrm{p}}
\newcommand{\ii}{\mathrm{i}}
\newcommand{\bcC}{\boldsymbol{\cC}}
\newcommand{\bcO}{\boldsymbol{\cO}}
\newtheorem{theorem}{Theorem}[section]
\newtheorem{lemma}{Lemma}[section]
\newtheorem{definition}{Definition}[section]
\theoremstyle{definition}
\newtheorem{remark}{Remark}
\begin{document}
  

\title{Balanced Truncation of Descriptor Systems with a Quadratic Output}
  
\author[$\ast$, $\blacktriangle$]{Jennifer Przybilla}
\affil[$\ast$]{Max Planck Institute for Dynamics of Complex Technical Systems, \newline Sandtorstra{\ss}e 1, 39106 Magdeburg, Germany.}
\affil[$\ast\ast$]{Otto von Guericke University Magdeburg, Fakult\"at f\"ur Mathematik, \newline Universit\"atsplatz 2, 39106 Magdeburg, Germany.}
\affil[$\blacktriangle$]{\email{przybilla@mpi-magdeburg.mpg.de}, \orcid{0000-0002-8703-8735}}
\author[$\ast$, $\dagger$]{Igor Pontes Duff}
\affil[$\dagger$]{\email{pontes@mpi-magdeburg.mpg.de}, \orcid{0000-0001-6433-6142}}
  \author[$\ast$, $\vartriangle$]{\newline Pawan Goyal}
\affil[$\vartriangle$]{\email{goyalp@mpi-magdeburg.mpg.de}, \orcid{0000-0003-3072-7780}}
    
  \author[$\ast$, $\ast\ast$, $\circledast$]{Peter Benner}
\affil[$\circledast$]{\email{benner@mpi-magdeburg.mpg.de}, \orcid{0000-0003-3362-4103}}

\shorttitle{BT of DAE{\_}Q Systems}
\shortauthor{J. Przybilla, I. Pontes Duff, P. Goyal, P. Benner}
\shortdate{}
 
\keywords{model order reduction, balanced truncation, differential-algebraic systems, quadratic output systems, system Gramians, reduced-order models.}

\abstract{%
This work discusses model reduction for differential-algebraic systems with quadratic output equations. 
Under mild conditions, these systems can be transformed into a Weierstra{\ss} canonical form and, thus, be decoupled into differential equations and algebraic equations. The corresponding decoupled states are referred to as proper and improper states.
Due to the quadratic function of the state as an output, the proper and improper states are coupled in the output equation, which imposes a challenge from a model reduction viewpoint. 
Keeping the coupling in mind, our goal in this work is to find important subspaces of the proper and improper states and to reduce the system accordingly. 
To that end, we first propose the system's matrices, the so-called Gramians, to characterize the system's dominant subspaces.
We pay particular attention to the computation of the observability Gramians that take into account the nonlinear coupling between the proper and the improper states. 
We furthermore show that the proposed Gramians are related to certain kernel functions, which are used to identify important subspaces.
This allows us to propose a reduction algorithm to obtain reduced-order systems by removing the subspaces that are difficult to reach, as well as, difficult to observe.
Moreover, we quantify the error between the full-order and reduced-order models and demonstrate the proposed methodology using three numerical experiments.
}

\novelty{

	\begin{itemize}
		\item  We discuss a balanced truncation approach for linear differential-algebraic equations with a quadratic output. 
		\item For this, we propose new Gramians, characterizing the importance of the state from the input-output view-point and show their connections to corresponding kernel functions that are used to identify important subspaces.
		\item Moreover, we discuss an algorithm to construct reduced-order models using these Gramians, and characterize the error between the full-order  and reduced-order models due to the truncation. 
		\item The performance of the proposed methodology is demonstrated using three numerical examples.
	\end{itemize}

}

\maketitle




\section{Introduction}
In this paper, we discuss balanced truncation for a class of descriptor systems with a quadratic output function of the form
\begin{align}\label{eq:DAE_q}
\begin{split}
\bE\dot{\bx}(t) &= \bA \bx(t) + \bB \bu(t), \\
\by(t) &= \bx(t)^{\T}\bM \bx(t),
\end{split}
\end{align} 
where $\bE,~\bA\in\Rnn$, $\bB \in\Rnm$, and $\bM\in\Rnn$, where the matrix $\bE$ is singular, and $\bM$ is assumed to be symmetric, i.e., $\bM=\bM^{\T}$.
Additionally, we assume that the matrix pencil $s\bE-\bA$ is regular---that is, $\det(s\bE- \bA)$ is not the zero polynomial.
The input vector, the state vector, and the output are denoted by $\bu(t)\in\Rm, ~ \bx(t)\in\Rn$ and $\by(t)\in\R$, respectively.
In the following, we also assume that all the finite eigenvalues of the matrix pencil $s\bE-\bA$ lie in the negative half-plane, i.e., the system is asymptotically stable. 
Note, that these systems can be interpreted as a special class of Wiener models.

Differential algebraic systems (DAEs) arise when for example, electrical circuits, thermal and diffusion processes, or multibody systems are modeled by methods such as finite elements or finite volumes.
These systems involve dynamic constraints that lead to algebraic equations, and therefore analysis tools must be developed for them. 
%
The system \eqref{eq:DAE_q} appears particularly while investigating the variance or deviation of the state variable from a certain reference point, which can be represented as a quadratic function of the state.

Models exhibiting complex dynamic behavior, or coming from PDEs discretization, are often high-fidelity models, i.e., the dimension of the state vector $n$ is large, which makes the engineering design process computationally infeasible. 
As a remedy, we seek to employ model reduction techniques that  allow us to construct a low-dimensional model which closely resembles the dynamic behaviors of the high-fidelity model. 
Our goal, in this paper, is to construct reduced-order models for the original models \eqref{eq:DAE_q} while preserving the original structure.
Precisely, we aim to determine the reduced-order models of the form
\begin{subequations}\label{eq:redDAE_q}
\begin{align}
\widehat{\bE}\dot{\widehat{\bx}}(t) &= \widehat{\bA}\widehat{\bx}(t) + \widehat{\bB }\bu(t),\label{eq:redDAE_q_a} \\
\widehat{\by}(t) &= \widehat{\bx}(t)^{\T}\widehat{\bM}\widehat{\bx}(t),\label{eq:redDAE_q_b}
\end{align}
\end{subequations}
where $\widehat{\bE}, ~\widehat{\bA}\in\Rrr$, $\widehat{\bB }\in\Rrm$ and $\widehat{\bM}\in\Rrr$, with $\widehat{\bM}=\widehat{\bM}^{\T}$ and $r\ll n$.
We obtain the reduced matrices in \eqref{eq:redDAE_q} by multiplying the matrices of system \eqref{eq:DAE_q} using two projection matrices, namely, $\bW_{\rr},~\bT_{\rr}\in\Rnr$, i.e., 
\begin{align*}
\widehat{\bE}=\bW_{\rr}^{\T}\bE \bT_{\rr}, \quad \widehat{\bA}=\bW_{\rr}^{\T}\bA \bT_{\rr}, \quad \widehat{\bB }=\bW_{\rr}^{\T}\bB , \quad \widehat{\bM}=\bT_{\rr}^{\T}\bM \bT_{\rr}.
\end{align*}
Furthermore, the reduced state and the approximated output are denoted by $\widehat{\bx}(t)\in\Rr$ and $\widehat{\by}(t)\in\mathbb{R}$, respectively.
The reduced-order model \eqref{eq:redDAE_q} shall approximate the input--to--output behavior of the full-order model \eqref{eq:DAE_q}, i.e., $\|\by-\widehat{\by}\| \leq \mathtt{tol}$, where $\mathtt{tol}$ is a user-defined tolerance,  for all admissible inputs $\bu$.

For ordinary differential equation (ODE) systems with a linear output equation, there exist several methods to construct reduced-order models, e.g.,  singular value-based approaches such as balanced truncation \cite{morMoo81,TomP87,morBenOCetal17} and Hankel norm approximations \cite{Glo84}. Moreover, moment matching methods \cite{morAntBG20,morLinBW07,morGugAB08} and Krylov subspace methods, e.g., the iterative rational Krylov algorithm (IRKA) \cite{morGugAB08, morGugSW13, morBenOCetal17, morFlaBG12} are frequently used.
A vast overview of these methods is given, e.g., in \cite{morBenGQetal21,morBenGQetal21a,morAntBG20,morBenOCetal17}.

All of the above-mentioned methods treat the case in which $\bE$ is nonsingular, and, therefore, are not directly applicable to the DAE case. 
There are several challenges that arise due to the algebraic equations.
Since the matrix $\bE$ is singular, the transfer function
$\bcG(s):= \bC(s\bE-\bA)^{-1}\bB $, defining the input-output mapping in the frequency domain, can have a non-zero polynomial part that needs to be preserved.
Therefore, a model reduction scheme for DAEs must preserve the polynomial part of its transfer function in the construction of reduced-order models. 
This issue is addressed in, e.g., \cite{morMehS05,morSty04,morGugSW13,morBenS17}.
Several existing methods that deal with the DAE case include interpolatory projection methods \cite{morGugSW13,morAhmB14,morAhmBG17,morAhmBGetal17} and balancing-based methods \cite{morSty04,morSty06,morHeiSS08,morMehS05,morBenQQ05}. 
In this work, we focus on a balancing-based method.
In balanced truncation (BT), one has to solve large generalized Lyapunov equations, also referred to as \emph{projected Lyapunov equations}, that act in the left and right deflating subspaces corresponding to the finite or infinite eigenvalues of the matrix pencil $s\bE - \bA$.
It requires to define the corresponding projection matrices that describe these deflating subspaces. 
However, such projection matrices are difficult to form explicitly.
Even if one manages, they can destroy the sparsity of the original matrices, thus, increasing the computational burden.
However, the structure of the DAE systems is often known and can be used to define, and implicitly apply the projection matrices in theory without the need of explicitly forming or multiplying by these projection matrices. 
For details, we refer to \cite{Sty08, morHeiSS08, morSaaV18, morBenSU16, morBenQQ05}.

However, BT is not directly applicable to the case of quadratic output functionals since the observability space is not of the same form as in the linear output case. Hence. the observability Gramian, defined in \cite{morMehS05}, is not usable.
In this work, we develop BT for DAEs with quadratic output equations. 
To that end, we derive new Gramians and corresponding kernel functions that allow us to characterize the controllability and observability behavior.
It is worth mentioning that in \cite{morBenGP21}, the authors derived Gramians corresponding to ODE systems (meaning $\bE = \bI$ in \eqref{eq:DAE_q}) with quadratic output equations. 
However, the methodology proposed in \cite{morBenGP21} cannot be directly applied to DAEs due to the singularity of the matrix $\bE$. 
Therefore, there is a necessity to modify BT to incorporate the differential-algebraic structure. 
Precisely, we tailor the Gramians, corresponding to the proper and improper states of the system in \eqref{eq:DAE_q} that describe the controllability and observability spaces. Based on this, we proposed a balancing scheme to determine projection matrices $\bW_{\rr}$ and $\bT_{\rr}$, leading to the construction of reduced-order models.

The remainder of the paper is organized as follows. In \Cref{sec:Prem}, we briefly recap BT for differential-algebraic systems with linear output equations from \cite{morSty04}.
Moreover, we provide an overview of the Gramians for ODE systems with quadratic output equations, proposed in \cite{morBenGP21}.
In \Cref{sec:Gramians}, we then derive the Gramians for the system \eqref{eq:DAE_q}. An algorithm to construct reduced-order models by truncating unimportant subspaces is discussed in \Cref{sec:Reduction}.
In \Cref{sec:ErrEst}, we derive an error estimator that bounds the output error between the original and reduced-order models.
Furthermore, in \Cref{sec:mulOutGram}, we extend the theory presented in \Cref{sec:Gramians,sec:Reduction,sec:ErrEst} for the systems with multiple inputs.
We demonstrate the efficiency of the method in \Cref{sec:NumRes} using three numerical examples.
We conclude the paper with a summary and future directions.

\section{Preliminaries}\label{sec:Prem}
In this section, we summarize previous works that form the basis for this paper. 
We begin by discussing the Weierstra\ss-canonical form for DAEs in \Cref{ssec:WCF} followed by the introduction to the BT method for DAE systems with linear output equation in \Cref{ssec:DAE_LO} and ODE systems with quadratic output equation in \Cref{ssec:LinSys_QuadOut}.

\subsection{Weierstra\ss-canonical form}\label{ssec:WCF}
We consider descriptor systems with a quadratic output function described in \eqref{eq:DAE_q}.
According to \cite{KunM06}, there exist matrices $\bW$ and $\bT$ that transform the differential-algebraic equation of the system \eqref{eq:DAE_q} into a Weierstra{\ss}-canonical form, i.e., 
\[
\bE = \bW\begin{bmatrix}
\bI_{n_f} & 0 \\
0 & \bN
\end{bmatrix}\bT, \quad \bA = \bW\begin{bmatrix}
\bJ & 0 \\
0 & \bI_{n_{\infty}}
\end{bmatrix}\bT, \quad \bB  = \bW\begin{bmatrix}
\bB _1 \\
\bB _2
\end{bmatrix}, \quad \bM = \bT^{\T}\begin{bmatrix}
\bM_{11} & \bM_{12} \\
\bM_{12}^{\T} & \bM_{22}
\end{bmatrix}\bT,
\]
with $n_f$ and $n_{\infty}$ being the respective numbers of the finite and infinite eigenvalues of the matrix pencil $s\bE-\bA$.
The matrix $\bJ\in\mathbb{R}^{n_f\times n_f}$ is in Jordan normal form, and $\bN\in\mathbb{R}^{n_{\infty}\times n_{\infty}}$ is nilpotent of nilpotency index $\nu$. Typically, the index $\nu$ is referred to as the index of the system \eqref{eq:DAE_q} as well. 
Moreover, we define the spectral projection matrices 
\begin{equation}\label{eq:Proj}
\bP_{\rr} = \bT^{-1}\begin{bmatrix}
\bI_{n_f} & 0\\
0 & 0
\end{bmatrix}\bT\quad\text{ and }\quad \bP_{\Ll} = \bW\begin{bmatrix}
\bI_{n_f} & 0\\
0 & 0
\end{bmatrix}\bW^{-1}
\end{equation}
onto the right and left deflating subspaces of the pencil $\lambda \bE-\bA$, corresponding to the finite eigenvalues.
By multiplying the system \eqref{eq:DAE_q} from the left by $\bW^{-1}$ and replacing $\bx(t) =:\bT^{-1}\begin{bmatrix}
\bx_1(t) \\
\bx_2(t)
\end{bmatrix}$, we obtain the following system:
\begin{subequations}\label{eq:DAE_q_WCF}
\begin{align}
\dot{\bx}_1(t) &= \bJ{\bx}_1(t)+\bB _1\bu(t),\label{eq:DAE_q_WCF_a} \\
\bN\dot{\bx}_2(t)&={\bx}_2(t) + \bB _2\bu(t),\label{eq:DAE_q_WCF_b} \\
\by(t) &= \begin{bmatrix}
{\bx}_1(t) \\
{\bx}_2(t)
\end{bmatrix}^{\T}\begin{bmatrix}
\bM_{11} & \bM_{12} \\
\bM_{12}^{\T} & \bM_{22}
\end{bmatrix}\begin{bmatrix}
{\bx}_1(t) \\
{\bx}_2(t)
\end{bmatrix}. \label{eq:DAE_q_WCF_c}
\end{align}
\end{subequations}
The system \eqref{eq:DAE_q_WCF} provides the decoupled proper and improper states $\bx_1(t)$ and $\bx_2(t)$.
Additionally, the solution trajectories of \eqref{eq:DAE_q_WCF_a} and \eqref{eq:DAE_q_WCF_b} are given as follows:
\begin{equation}\label{eq:solutionsWCF}
\bx_1(t) = \int_{0}^{t} e^{\bJ(t-\tau)}\bB _1\bu(\tau)\dd\tau, \qquad \bx_2(t) = \sum_{k=0}^{\nu -1} -\bN^k\bB _2 \bu^{(k)}(t),
\end{equation}
where $\bu^{(k)}(t)$ describes the $k$-th derivative of the function $\bu(\cdot)$ evaluated in the time variable $t$.
It is easy to see that initial conditions must satisfy $\bx_2(0) = \sum_{k=0}^{\nu -1} -\bN^k\bB _2 \bu^{(k)}(0)$. In this case the initial state is called \emph{consistent}.

Furthermore, we define 
\begin{align}\label{eq:F_J_F_N}
\bF_J(t):=\bT^{-1}\begin{bmatrix}
e^{\bJ t} & 0\\
0 & 0
\end{bmatrix}\bW^{-1} \qquad \text{and}\qquad \bF_N(k):=\bT^{-1}\begin{bmatrix}
0 & 0\\
0 & -\bN^k
\end{bmatrix}\bW^{-1}
\end{align}
and transform $\bx_1(t)$ and $\bx_2(t)$ into the original state space of system \eqref{eq:DAE_q} to obtain the proper and improper states 
\begin{equation}\label{eq:solutionsDAE}
\bx_{\p}(t) = \int_{0}^{t}\bF_J(t-\tau)\bB \bu(\tau)\dd\tau \;\;  \text{ and }\;\;   \bx_{\ii}(t) = \sum_{k=0}^{\nu -1}\bF_N(k)\bB \bu^{(k)}(t)
\end{equation}
with $\bx(t) = \bx_{\p}(t) + \bx_{\ii}(t)$.

In this paper, we aim to define Gramians that describe the controllability and the observability of the proper and improper states $\bx_{\p}(t)$ and $\bx_{\ii}(t)$.
Using these Gramians, we then derive tailored energy functional estimations that are used to identify irrelevant states. 
Note that the Weierstra{\ss}-canoncial form will only serve as a tool for analysis, but will not be computed in practice as its numerical determination is known to be difficult.

\subsection{BT for DAEs with linear output}\label{ssec:DAE_LO}
Here, we describe model reduction by BT for descriptor systems as introduced in \cite{morMehS05}.
We consider the continuous-time descriptor system with a linear output equation
\begin{align}\label{eq:DAE_l}
\begin{split}
\bE\dot{\bx}(t) &= \bA \bx(t) + \bB \bu(t),\qquad \bx(0) = 0, \\
\by(t) &= \bC \bx(t),
\end{split}
\end{align}
where the matrices $\bE, ~\bA, ~\bB$, and the vectors $\bx(t), ~\bu(t)$ are as in the system \eqref{eq:DAE_q}. 
The output equation provides the output, $\by(t)\in\Rq$, that results from the output matrix $\bC\in\Rqn$ and the state $\bx(t)$.
We assume that system \eqref{eq:DAE_l} is asymptotically stable, i.e., all finite eigenvalues of the matrix pencil $s\bE-\bA$ lie in the negative half-plane.

As described in the above subsection, the state $\bx(t)$ of system \eqref{eq:DAE_l} can be decomposed as
$
\bx(t) = \bx_{\p}(t) + \bx_{\ii}(t)$ with $\bx_{\p}(t)$ and $\bx_{\ii}(t)$ as defined in \eqref{eq:solutionsDAE}.
We define the corresponding input--to--state mappings 
\[
\bcC_{\p}(t):=\bF_J(t)\bB \qquad\text{ and }\qquad\bcC_{\ii}(k):=\bF_N(k)\bB\]
 of the system \eqref{eq:DAE_l} that describe the controllability of the corresponding states.
With the help of these mappings, we can define the corresponding controllability Gramians of the system \eqref{eq:DAE_l}.
They are defined as $\bcP_{\p} := \int_{0}^{\infty} \bcC_{\p}(t)\bcC_{\p}(t)^{\T} \dd t$ and $\bcP_{\ii} := \sum_{k=0}^{\nu -1} \bcC_{\ii}(k)\bcC_{\ii}(k)^{\T}$ and result in the following Gramian expressions:
\begin{equation}\label{eq:Gramian_contr}
\bcP_{\p} := \int_{0}^{\infty} \bF_J(t)\bB \bB^{\T}\bF_J(t)^{\T}\dd t, \quad \bcP_{\ii} := \sum_{k=0}^{\nu -1} \bF_N(k)\bB \bB ^{\T}\bF_N(k)^{\T}.
\end{equation}
The matrices $\bcP_{\p}$ and $\bcP_{\ii}$ span the controllability space of the states $\bx_{\p}(t)$ and $\bx_{\ii}(t)$. 
Furthermore, inserting the definitions of $\bF_J(t)$ and $\bF_N(k)$ in \eqref{eq:Gramian_contr} indicates the Gramians to be of  the following form
\begin{equation}\label{eq:Contr_Gram}
\bcP_{\p} = \bT^{-1}\begin{bmatrix}
\bcP_1 & 0 \\
0 & 0
\end{bmatrix}\bT^{-\T}, \qquad \bcP_{\ii} = \bT^{-1}\begin{bmatrix}
0 & 0 \\
0 & \bcP_2
\end{bmatrix}\bT^{-\T}
\end{equation}
where $\bcP_1 = \int_{0}^{\infty} e^{\bJ t}\bB _1\bB _1^{\T}e^{\bJ^{\T}t} \dd t$ and $\bcP_2= \sum_{k=0}^{\nu -1} \bN^k\bB _2\bB _2^{\T}(\bN^k)^{\T}$ are the controllability Gramians corresponding to $\bx_1(t)$ and $\bx_2(t)$.
Using the controllability Gramians, we can characterize the states that are difficult to reach or even unreachable, which play an important role in the reduction of the system.

The Gramians $\bcP_{\p}$ and $\bcP_{\ii}$ satisfy the following time-continuous and time-discrete projected Lyapunov equation
\begin{subequations}\label{eq:Ly_P}
\begin{align}
\bE\bcP_{\p} \bA^{\T} + \bA\bcP_{\p} \bE^{\T} &= -\bP_{\Ll}\bB \bB ^{\T}\bP_{\Ll}^{\T}, \qquad\qquad\qquad \bcP_{\p} = \bP_{\rr}\bcP_{\p}\bP_{\rr}^{\T}, \label{eq:Ly_Pp}\\
\bA \bcP_{\ii} \bA^{\T} - \bE\bcP_{\ii} \bE^{\T} &= (I-\bP_{\Ll})\bB \bB ^{\T}(I-\bP_{\Ll})^{\T}, \qquad\;\, 0 = \bP_{\rr}\bcP_{\ii}\bP_{\rr}^{\T} \label{eq:Ly_Pi}
\end{align}
\end{subequations}
where the projection matrices $\bP_{\Ll}$ and $\bP_{\rr}$ are as defined in \eqref{eq:Proj}.

To describe the observability of the system \eqref{eq:DAE_l}, we consider the 
state--to--output mappings \[
\bcO_{\p}(t):=\bC\bF_J(t)\qquad\text{ and }\qquad\bcO_{\ii}(k):=\bC\bF_N(k).
\]
These mappings are used to describe the observability of certain states and are therefore used to define the proper and improper observability Gramians as $\bcQ_{\p} := \int_{0}^{\infty} \bcO_{\p}(t)^{\T}\bcO_{\p}(t) \dd t$ and $\bcQ_{\ii} := \sum_{k=0}^{\nu-1} \bcO_{\ii}(k)^{\T}\bcO_{\ii}(k)$ such that we obtain
\begin{align*}
\bcQ_{\p} := \int_{0}^{\infty} \bF_J(t)^{\T}\bC^{\T}\bC \bF_J(t) \dd \tau,   \qquad
\bcQ_{\ii} := \sum_{k=0}^{\nu-1} \bF_N(k)^{\T}\bC^{\T}\bC \bF_N(k).
\end{align*}
The goal of BT is to determine simultaneously the states which are both hard to reach and hard to observe. 
In general, the states corresponding to small singular values of the controllability Gramians do not coincide with the states corresponding to small singular values of the observability Gramians.
Therefore, we need to balance the system first, i.e., we transform the system to obtain a balanced one.

\begin{definition}
We call the system \eqref{eq:DAE_l} \emph{balanced} if the Gramians satisfy 
\[
\bcP_{\p} = \bcQ_{\p} = \begin{bmatrix}
\bSigma & 0\\
0 & 0
\end{bmatrix}, \qquad \bcP_{\ii} = \bcQ_{\ii} = \begin{bmatrix}
0 & 0\\
0 & \bTheta
\end{bmatrix},
\]
where $\bSigma=\diag{\sigma_1, \dots, \sigma_{n_f}}$, and $\bTheta=\diag{\theta_1, \dots, \theta_{n_\infty}}$.
\end{definition}
Since all Gramians are symmetric and positive semi-definite, there exist factorizations
\[
\bcP_{\p} = \bR_{\p}\bR_{\p}^{\T}, \quad \bcQ_{\p} = \bL_{\p}^{\T}\bL_{\p}, \quad \bcP_{\ii} = \bR_{\ii}\bR_{\ii}^{\T}, \quad \bcQ_{\ii} = \bL_{\ii}^{\T}\bL_{\ii}.
\]
Next, We compute the singular value decompositions 
\begin{align*}
\bL_{\p}\bE \bR_{\p} &= \bU_{\p}\bSigma \bV_{\p}^{\T} = \begin{bmatrix}
\bU_{\p,1} & \bU_{\p,2}
\end{bmatrix}\begin{bmatrix}
\bSigma_1 & \\
& \bSigma_2
\end{bmatrix}\begin{bmatrix}
\bV_{\p,1}^{\T} \\
\bV_{\p,2}^{\T}
\end{bmatrix},\\ \bL_{\ii}\bA \bR_{\ii} &= \bU_{\ii}\bTheta \bV_{\ii}^{\T} = \begin{bmatrix}
\bU_{\ii,1} & \bU_{\ii,2}
\end{bmatrix}\begin{bmatrix}
\bTheta_1 & \\
& 0
\end{bmatrix}\begin{bmatrix}
\bV_{\ii,1}^{\T} \\
\bV_{\ii,2}^{\T}
\end{bmatrix}
\end{align*}
where $\bSigma=\mathrm{diag}(\sigma_1, \dots, \sigma_n)$, $\sigma_1\geq \dots\geq \sigma_n$, includes the proper Hankel singular values of the system.
The proper states that are simultaneously difficult to reach and to observe correspond to the smallest Hankel singular values that are $\bSigma_2$.
We truncate the corresponding states that lie in the spaces spanned by $\bU_{p,2}$ and $\bV_{p,2}$ 
by building the projection matrices 
\[
\bW_{\rr} = [\bL_{\p}^{\T}\bU_{\p,1}\bSigma_1^{-\frac{1}{2}},~\bL_{\ii}^{\T}\bU_{\ii,1}\bTheta^{-\frac{1}{2}}], \qquad 
\bT_{\rr} = [\bR_{\p}^{\T}\bV_{\p,1}\bSigma_1^{-\frac{1}{2}},~\bR_{\ii}^{\T}\bV_{\ii,1}\bTheta^{-\frac{1}{2}}]. 
\]
Note that additionally improper states that correspond to zero singular values in $\bTheta$, i.e., the states that lie in the spaces spanned by $\bU_{\ii, 2}$ and $\bV_{\ii,2}$, are truncated. 
Multiplying the matrices of the full-order model in \eqref{eq:DAE_l} by $\bW_{\rr}$ and $\bT_{\rr}$ leads to a reduced-order model
\begin{align*}
\widehat{\bE}
\dot{\widehat{\bx}}(t) &= \widehat{\bA}
\widehat{\bx}(t)+
\widehat{\bB }\bu(t),\\
\widehat{\by}(t) &=
\widehat{\bC}
\widehat{\bx}(t),
\end{align*}
where it can be shown that
$
\widehat{\bE}=\begin{bmatrix}
\bI & 0 \\
0 & \widehat{\bE}_2
\end{bmatrix} $ and $\widehat{\bA}=\begin{bmatrix}\widehat{\bA}_1 & 0 \\
0 & \bI
\end{bmatrix}
$
with $\widehat{\bA}_1$  being nonsingular and $\widehat{\bE}_2$ being nilpotent.
Consequently, the reduced-order model is inherently decoupled into a proper and an improper reduced state.
The quality of the approximation can be estimated as
\begin{equation*}
\|\bcG-\widehat{\bcG} \|_{\cH_{\infty} }\leq 2(\sigma_{r+1}+\dots + \sigma_{n_f}),
\end{equation*}
where $\bcG(s) := \bC(s\bE-\bA)^{-1}\bB $ and $\widehat{\bcG}(s) := \widehat{\bC}(s\widehat{\bE}-\widehat{\bA})^{-1}\widehat{\bB }$ are the transfer functions of the original and reduced-order models, respectively, and $\sigma_{\ii}$ is the $i$-th largest singular value, that is the $i$-th diagonal element of $\bSigma$. 

We emphasize that the controllability behavior of the model in \eqref{eq:DAE_l} is the same as for the model in \eqref{eq:DAE_q}, as the input--to--state mapping is the same. However,
the observability behavior of \eqref{eq:DAE_l} is not the same as for \eqref{eq:DAE_q}, due to the quadratic form of the output equations instead of the linear one in \eqref{eq:DAE_l}.

\subsection{BT for linear dynamical systems with quadratic outputs}\label{ssec:LinSys_QuadOut}
In this subsection, we briefly summarize BT for linear systems with quadratic outputs---introduced in \cite{morBenGP21}---for $\bE = \bI$.
These systems are of the form
\begin{align}\label{eq:ODE_q}
\begin{split}
\dot{\bx}(t) &= \bA \bx(t) + \bB \bu(t),\qquad \bx(0) = 0, \\
\by(t) &= \bx(t)^{\mathrm{T}}\bM \bx(t),
\end{split}
\end{align}
where the matrices are defined as in \eqref{eq:DAE_q}, except that the matrix $\bE$ is replaced by the identity matrix. 
The goal is to find projection matrices $\bW_{\rr},~\bT_{\rr}\in\Rnr$ such that the reduced-order system
\begin{align}\label{eq:redODE_q}
\begin{split}
\dot{\widehat{\bx}}(t) &= \widehat{\bA}\widehat{\bx}(t) + \widehat{\bB }\bu(t),\qquad \widehat{\bx}(0) = 0, \\
\widehat{\by}(t) &= \widehat{\bx}(t)^{\T}\widehat{\bM}\widehat{\bx}(t)
\end{split}
\end{align}
with 
$
\widehat{\bA}:=\bW_{\rr}^{\T}\bA \bT_{\rr}, ~ \widehat{\bB }:=\bW_{\rr}^{\T}\bB , ~ \widehat{\bM}:=\bT_{\rr}^{\T}\bM \bT_{\rr}
$,
approximates the input--to--output behavior of the model in \eqref{eq:ODE_q} well.
To achieve this goal, using BT, a new pair of Gramians is discussed in \cite{morBenGP21}, tailored for the model in \eqref{eq:ODE_q}.

The state $\bx(t)$ of the model in \eqref{eq:ODE_q} is given by $\bx(t)  = \int_{0}^{t}e^{\bA(t-\tau)}\bB \bu(\tau)\dd \tau. $
We define the input--to--state mapping as $\bcC(t) := e^{\bA t}\bB $.
The controllability space of the model in \eqref{eq:ODE_q} is described by the controllability Gramian $\bcP$, that is 
\[
\bcP:= \int_{0}^{\infty} \bcC(t)\bcC(t)^{\mathrm{T}}\mathrm{d}t= \int_{0}^{\infty} e^{\bA t}\bB \bB ^{\mathrm{T}}e^{\bA^{\mathrm{T}}t}\mathrm{d}t.
\]
To describe the observability of the system, we consider the output equation
\begin{align*}
\by(t) &= \bx(t)^{\mathrm{T}}\bM \bx(t) = \int_{0}^{t}\int_{0}^{t}\bu(t_1)^{\mathrm{T}}\bB ^{\mathrm{T}}e^{\bA^{\mathrm{T}}t{_1}}\bM e^{\bA t_2}\bB \bu(t_2)\dd t_1\dd t_2 \\
&= \int_{0}^{t}\int_{0}^{t}\vecop{\bB ^{\mathrm{T}}e^{\bA^{\mathrm{T}}(t-\tau_1)}\bM e^{\bA(t-\tau_2)}\bB }(\bu(\tau_2)\otimes \bu(\tau_1))\dd \tau_1\dd \tau_2.
\end{align*}
We identify the input--to--state mapping $\bcC(t)=e^{\bA t}\bB $ that was defined above and define the state--to--output mapping 
\[
\bcO(t_1, t_2) := \bB ^{\mathrm{T}}e^{\bA^{\mathrm{T}}t_1}\bM e^{\bA t_2}.
\]
The corresponding observability Gramian $\bcQ$ is defined as follows:
\begin{align*}
\bcQ &=\int_{0}^{\infty} \int_{0}^{\infty}\bcO(t_1, t_2)^{\mathrm{T}}\bcO(t_1, t_2)\dd t_1\dd t_2
= \int_{0}^{\infty}  e^{\bA^{\mathrm{T}}t_2}\bM\int_{0}^{\infty}e^{\bA t_1}\bB \bB ^{\mathrm{T}}e^{\bA^{\mathrm{T}}t_1}\mathrm{d}t_1 \bM e^{\bA t_2}\mathrm{d}t_2\\
&= \int_{0}^{\infty}e^{\bA^{\mathrm{T}}t_2}\bM\bcP \bM e^{\bA t_2}\mathrm{d}t_2.
\end{align*}
Because of the positive definiteness of $\bcP$ and $\bcQ$, we can compute Cholesky factorizations $\bcP=\bR\bR^{\T}$ and $\bcQ=\bS\bS^{\T}$.
We make use of the energy functionals $E_{\mathrm{c}}(\bx_0)$ that is the minimal energy that is needed to reach a state $\bx_0$ and $E_{\mathrm{o}}(\bx_0)$ that is the output energy that is produced by a nonzero initial condition $\bx_0$. 
They satisfy 
\begin{align*}
E_{\mathrm{c}}(\bx_0)&:=\min_{\tiny\begin{matrix}
\bx(-\infty)=0\\\bx(0) = 0
\end{matrix}} \|\bu\|_{L_2}^2 = \bx_0^{\T}\bcP^{-1} \bx_0, \\ 
E_{\mathrm{o}}(\bx_0)&:=  \int_{0}^{\infty}\|\by(t)\|^2 \dd t  \leq \bx_0^{\T}\bcQ \bx_0\quad\text{for}\quad \bx_0 = \bR \bw,\; \|\bw\|\leq 1
\end{align*}
and can be used to characterize the hard to reach and hard to observe states, which correspond to small singular values of $\bcP$ and $\bcQ$, respectively. To truncate such states simultaneously,  we compute the singular value decomposition
\[
\bS^{\T} \bE \bR = \bU_{\p}\bSigma \bV_{\p}^{\T} = \begin{bmatrix}
\bU_1 & \bU_2
\end{bmatrix}\begin{bmatrix}
\bSigma_1 & \\
& \bSigma_2
\end{bmatrix}\begin{bmatrix}
\bV_1^{\T} \\
\bV_2^{\T}
\end{bmatrix}.
\]
The singular values in $\bSigma_1$ and the corresponding most observable and reachable states lying in the spaces spanned by $\bV_1$ and $\bU_1$ are used to derive the projection matrices 
\[
\bW_{\rr} = \bS^{\T}\bU_1\bSigma_1^{-\frac{1}{2}}, \qquad 
\bT_{\rr}  = \bR^{\T}\bV_1\bSigma_1^{-\frac{1}{2}}.
\]
We multiply the model in \eqref{eq:ODE_q} by $\bW_{\rr}$ and $\bT_{\rr}$ and obtain the reduced-order model in  \eqref{eq:redODE_q} that satisfies the error bound 
\begin{align*}
\|\by-\widehat{\by}\|_{\cL_{\infty}} \leq  \sqrt{\trace{\bB ^{\T}\bcQ \bB  - 2\bB ^{\T}\bcZ \widehat{\bB } + \widehat{\bB }^{\T}\widehat{\bcQ} \widehat{\bB }}}\|\bu \otimes \bu\|_{\cL_2}
\end{align*}
with 
\[
\bA^{\T}\bcZ + \bcZ\widehat{\bA} = -\bM\bcX\widehat{\bM}, \qquad \bA\bcX+\bcX\widehat{\bA}^{\T} = -\bB \widehat{\bB }^{\T}.
\]
The matrix $\widehat{\bcQ}$ is the observability Gramian of the reduced-order model in \eqref{eq:redODE_q}.

In the following sections, we extend the methodology for the DAE systems as described in \eqref{eq:DAE_q} and derive the corresponding proper and improper observability Gramians.

\section{Gramians for DAE systems with quadratic output}\label{sec:Gramians}
We aim to extend BT for  DAE systems with quadratic output equations described in \eqref{eq:DAE_q}. Therefore, we require tailored Gramians encoding controllability and observability subspaces for this class of systems.
Since the nonlinearities appear only on the output equation, the controllability Gramians from \eqref{eq:Gramian_contr}
can be used to characterize controllability.
However,  the extension of observability Gramians for DAE systems with quadratic output is not straightforward. 
Hence, in this section, we propose new Gramians that describe the observability of the proper and improper states based on an output decomposition. 
Later, we use them in our proposed BT method for DAE systems with quadratic output.
To derive observability Gramians, we decompose the output in the following way 
\begin{align*}
\by(t) &= \bx_{\p}(t)^{\T}\bM \bx_{\p}(t) + \bx_{\p}(t)^{\T}\bM \bx_{\ii}(t) + \bx_{\ii}(t)^{\T}\bM \bx_{\p}(t) + \bx_{\ii}(t)^{\T}\bM \bx_{\ii}(t)\\
&=:\by_{\p\p}(t)+\by_{\p\ii}(t)+\by_{\ii\p}(t)+\by_{\ii\ii}(t),
\end{align*}
where $\bx_{\p}$ and $ \bx_{\ii}$ are, respectively, the proper and improper states.
Note that the components $\bx_{\p}(t)^{\T}\bM \bx_{\ii}(t)$ and $\bx_{\ii}(t)^{\T}\bM \bx_{\p}(t)$ coincide. However, we will treat them independently for the purposes of the following derivations.
The idea in the following is to investigate the four components of the output separately. 
Since the output is a superposition of the four components, the Gramians that describe the output components sum up to Gramians that describe the overall observability of the system.

For a better understanding, we can rewrite $\by(t)$ by defining the state depending function $\bC(\bx(t)):=\bx(t)^{\T}\bM$.
Applying this representation to the decomposed output yields
\[
\by(t) = \bC(\bx_{\p}(t)) \bx_{\p}(t) + \bC(\bx_{\p}(t)) \bx_{\ii}(t) + \bC(\bx_{\ii}(t))\bx_{\p}(t) + \bC(\bx_{\ii}(t))\bx_{\ii}(t).
\]
We observe, that the observability of the state $\bx_{\p}(t)$ in the output $\by_{\ii\p}(t)=\bC(\bx_{\ii}(t))\bx_{\p}(t)$ also depends on the reachability of $\bx_{\ii}(t)$.
On the other hand, the observability of the improper state $\bx_{\ii}(t)$ corresponding to $\by_{\p\ii}(t)=\bC(\bx_{\p}(t))\bx_{\ii}(t)$ depends on the reachability of $\bx_{\p}(t)$. 
Hence, the outputs $\by_{\ii\p}(t)=\by_{\p\ii}(t)$ encode two different observability properties.
Analogously, the outputs $\by_{\p\p}(t)$ and $\by_{\ii\ii}(t)$ encode the observability of a proper state depending on the reachability of the same, and the observability of an improper state depending on the reachability of an improper one.

In this section, we define proper observability Gramians encoding the observability behavior of the proper state $\bx_{\p}(t)$ corresponding to $\bC(\bx_{\p}(t))$ and $\bC(\bx_{\ii}(t))$ and improper observability Gramians describing the observability of the improper states $\bx_{\ii}(t)$ corresponding to $\bC(\bx_{\p}(t))$ and $\bC(\bx_{\ii}(t))$.
Because of the dependencies on the reachability of $\bx_{\p}(t)$ and $\bx_{\ii}(t)$ encoded by $\bC(\bx_{\p}(t))$ and $\bC(\bx_{\ii}(t))$, we expect that the observability Gramians will depend on the controllability Gramians $\bcP_{\p}$ and $\bcP_{\ii}$.

\subsection{Proper observability Gramian}\label{ssec:propObsGram}
In this subsection, we investigate the two outputs $\by_{\p\p}(t)$ and $\by_{\ii\p}(t)$ and their observability properties. 
We aim to describe the observability of the right proper state depending on the second (left) state in the quadratic output equation, which is in the first case proper and in the second case improper.

\subparagraph*{Proper-proper output}
We start investigating the first component of the output  $\by_{\p\p}(t) = \bx_{\p}(t)^{\T}\bM \bx_{\p}(t)$ that includes two proper states. 
We define the state--to--output mapping \[
\bcO_{\p\p}(t_1, t_2) := \bB ^{\T}\bF_J(t_1)\bM \bF_J(t_2)
\]
that is used to describe the observability corresponding to $\by_{\p\p}(t)$.
Based on this mapping, we define in the following the proper-proper observability Gramian $\bcQ_{\p\p}$ as
\begin{align*}
\bcQ_{\p\p}&:= \int_{0}^{\infty}\int_{0}^{\infty}\bcO_{\p\p}(t_1, t_2)^{\T}\bcO_{\p\p}(t_1, t_2)\dd t_1 \dd t_2\\
&=\int_{0}^{\infty}\int_{0}^{\infty}\bF_J(t_2)^{\T}\bM \bF_J(t_1)\bB \bB ^{\T}\bF_J(t_1)^{\T}\bM \bF_J(t_2) \dd t_1 \dd t_2\\
&=\int_{0}^{\infty} \bF_J(t_2)^{\T}\bM\bcP_{\p} \bM \bF_J(t_2)  \dd t_2
\end{align*}
which results in the following definition.
\begin{definition}\label{def:Qpp}
Consider the asymptotically stable DAE system with a quadratic output equation from \eqref{eq:DAE_q} and the corresponding proper controllability Gramian $\bcP_{\p}$ as defined in \eqref{eq:Gramian_contr}.
The proper-proper observability Gramian $\bcQ_{\p\p}$ corresponding to the output $\by_{\p\p}$ 
is defined as
\begin{align*}
\bcQ_{\p\p}
=\int_{0}^{\infty} \bF_J(t)^{\T}\bM\bcP_{\p} \bM \bF_J(t)  \dd t,
\end{align*}
where $\bF_J(t)$ is defined as in \eqref{eq:F_J_F_N}.
\end{definition}
The above defined Gramian $\bcQ_{\p\p}$ can be transformed into the Weierstra{\ss}-canonical representation \eqref{eq:DAE_q_WCF}.
For that we insert the function $\bF_J(t)$ leading to
\[
\bcQ_{\p\p}:=  \bW^{-\T}\begin{bmatrix}
\bQ_{11} & 0 \\
0 & 0
\end{bmatrix}\bW^{-1},
\]
where 
\begin{equation}\label{eq:Q11}
\bQ_{11}:=\int_{0}^{\infty} e^{\bJ^{\T} t} \bM_{11}\bcP_1 \bM_{11} e^{\bJ t}  \dd t
\end{equation} 
is the proper-proper observability Gramian of the state $\bx_1(t)$ from system \eqref{eq:DAE_q_WCF}.
The Gramian $\bQ_{11}$ corresponding to the ODE system was analyzed in \cite{morBenGP21}. 

\begin{theorem}\label{theo:LyQ_pp}
Consider the asymptotically stable DAE system with a quadratic output equation from \eqref{eq:DAE_q} and the corresponding proper controllability Gramian $\bcP_{\p}$ as defined in \eqref{eq:Gramian_contr}.
The proper observability Gramian $\bcQ_{\p\p}$ solves the projected Lyapunov equation
\[
\bE^{\T} \bcQ_{\p\p} \bA + \bA^{\T} \bcQ_{\p\p} \bE = -\bP_{\rr}^{\T}\bM\bcP_{\p}\bM \bP_{\rr},\qquad \bcQ_{\p\p} = \bP_{\Ll}^{\T}\bcQ_{\p\p}\bP_{\Ll}.
\]
where the projection matrices $\bP_{\Ll}$ and $\bP_{\rr}$ are defined as in \eqref{eq:Proj}.
\end{theorem}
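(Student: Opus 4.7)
The plan is to reduce the projected Lyapunov equation to a standard (unprojected) Lyapunov equation for the upper-left block of $\bcQ_{\p\p}$ expressed in the Weierstra\ss{} basis, and then to appeal to classical Lyapunov theory. From the derivation immediately preceding the theorem, we already have the block representation
\[
\bcQ_{\p\p} = \bW^{-\T}\begin{bmatrix}\bQ_{11} & 0 \\ 0 & 0\end{bmatrix}\bW^{-1}, \qquad \bQ_{11} = \int_{0}^{\infty} e^{\bJ^{\T} t}\,\bM_{11}\bcP_1\bM_{11}\,e^{\bJ t}\dd t,
\]
so the pieces are already in place.

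First I would verify the projection identity $\bcQ_{\p\p} = \bP_{\Ll}^{\T}\bcQ_{\p\p}\bP_{\Ll}$. Using the definitions of $\bP_{\Ll}$ in \eqref{eq:Proj} and of $\bF_J(t)$ in \eqref{eq:F_J_F_N}, a direct multiplication shows $\bF_J(t)\bP_{\Ll} = \bF_J(t)$, because multiplication by the block-diagonal projector $\mathrm{diag}(\bI_{n_f},0)$ from the right leaves the block structure of $\bF_J(t)$ unchanged. Transposing gives $\bP_{\Ll}^{\T}\bF_J(t)^{\T} = \bF_J(t)^{\T}$, and inserting this identity on both sides of the integrand in \Cref{def:Qpp} yields the desired projection property.

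Next I would derive the Lyapunov-type equation itself. Substituting the Weierstra\ss{} factorizations of $\bE$ and $\bA$ into $\bE^{\T}\bcQ_{\p\p}\bA + \bA^{\T}\bcQ_{\p\p}\bE$, all products collapse because of the block structure of $\bcQ_{\p\p}$: the blocks $\bN$ and $\bI_{n_\infty}$ are annihilated on contact with the zero blocks, and what remains is
\[
\bE^{\T}\bcQ_{\p\p}\bA + \bA^{\T}\bcQ_{\p\p}\bE = \bT^{\T}\begin{bmatrix}\bJ^{\T}\bQ_{11}+\bQ_{11}\bJ & 0 \\ 0 & 0\end{bmatrix}\bT.
\]
A parallel computation for the right-hand side $-\bP_{\rr}^{\T}\bM\bcP_{\p}\bM\bP_{\rr}$, using the Weierstra\ss{} representations of $\bcP_{\p}$, $\bM$, and $\bP_{\rr}$, shows that the off-diagonal $\bM_{12}$ contributions are killed by the flanking projectors $\bP_{\rr}$ and $\bP_{\rr}^{\T}$, leaving
\[
-\bP_{\rr}^{\T}\bM\bcP_{\p}\bM\bP_{\rr} = -\bT^{\T}\begin{bmatrix}\bM_{11}\bcP_1\bM_{11} & 0 \\ 0 & 0\end{bmatrix}\bT.
\]
Matching the two sides thus reduces the claim to the standard Lyapunov identity $\bJ^{\T}\bQ_{11}+\bQ_{11}\bJ = -\bM_{11}\bcP_1\bM_{11}$ in the $(1,1)$ block.

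Finally, this reduced identity follows from classical stability theory: since $\bJ$ collects the finite eigenvalues of $s\bE-\bA$ and the system is asymptotically stable, $\bJ$ is Hurwitz, so differentiating $e^{\bJ^{\T} t}\bM_{11}\bcP_1\bM_{11}e^{\bJ t}$ with respect to $t$ and integrating from $0$ to $\infty$ produces exactly the claimed Lyapunov equation (this is essentially the same argument cited for the ODE case in \cite{morBenGP21} applied to $\bQ_{11}$ and $\bcP_1$). I expect the main obstacle to be purely bookkeeping: carefully tracking that every block product involving the nilpotent block $\bN$, the improper identity $\bI_{n_\infty}$, or the off-diagonal $\bM_{12}$ indeed vanishes under the flanking projectors, so that the problem cleanly collapses onto the ODE subproblem on $\mathbb{R}^{n_f}$.
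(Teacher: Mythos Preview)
Your proposal is correct and follows essentially the same route as the paper: both reduce the projected Lyapunov equation to the $(1,1)$-block identity $\bJ^{\T}\bQ_{11}+\bQ_{11}\bJ=-\bM_{11}\bcP_1\bM_{11}$ via the Weierstra\ss{} form and then verify that identity by differentiating and integrating the integrand of $\bQ_{11}$. The only cosmetic difference is that you handle the projection condition via the invariance $\bF_J(t)\bP_{\Ll}=\bF_J(t)$, whereas the paper reads it off directly from the block structure of $\bcQ_{\p\p}$.
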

\begin{proof}
We first show that the Gramian $\bQ_{11}$ defined in \eqref{eq:Q11} solves the Lyapunov equation 
\begin{equation}\label{eq:Ly_Q_pp_hat}
\bJ^{\T}\bQ_{11} + \bQ_{11}\bJ = - \bM_{11}\bcP_1 \bM_{11}.
\end{equation}
Therefore, we insert $\bQ_{11}$ into \eqref{eq:Ly_Q_pp_hat} and obtain 
\begin{align*}
\int_{0}^{\infty} \left( \bJ^{\T}e^{\bJ^{\T} t} \bM_{11}\bcP_1 \bM_{11} e^{\bJ t} + e^{\bJ^{\T} t} \bM_{11}\bcP_1 \bM_{11} e^{\bJ t}\bJ\right) \dd t = \left[ e^{\bJ^{\T} t} \bM_{11}\bcP_1 \bM_{11} e^{\bJ t}\right]_0^{\infty}  = - \bM_{11}\bcP_1 \bM_{11}.
\end{align*}
To prove the statement of the theorem, we first observe that the projection condition is naturally satisfied since  $\bcQ_{\p\p}$ is by definition equal to $\bW^{-\T}\begin{bmatrix}
\bQ_{11} & 0 \\
0 & 0
\end{bmatrix}\bW^{-1}$.
To prove that $\bcQ_{\p\p}$ satisfies the remaining Lyapunov equation, we insert the Weierstra{\ss}-canonical form of $\bE$ and $\bA$ and the definition of $\bP_{\rr}$ into the equation to obtain 
\begin{align*}
\bT^{\T}\begin{bmatrix}
\bI & 0 \\
0 & \bN^{\T}
\end{bmatrix}\begin{bmatrix}
\bQ_{11} & 0 \\
0 & 0
\end{bmatrix}\begin{bmatrix}
\bJ & 0 \\
0 & \bI
\end{bmatrix}\bT 
+ \bT^{\T}\begin{bmatrix}
\bJ^{\T} & 0 \\
0 & \bI
\end{bmatrix}&\begin{bmatrix}
\bQ_{11} & 0 \\
0 & 0
\end{bmatrix} \begin{bmatrix}
\bI & 0 \\
0 & \bN
\end{bmatrix}\bT\\
&=\bT^{\T}\begin{bmatrix}
\bQ_{11}\bJ & 0 \\
0 & 0
\end{bmatrix}\bT 
+ \bT^{\T}\begin{bmatrix}
\bJ^{\T}\bQ_{11} & 0 \\
0 & 0
\end{bmatrix}\bT\\
&= -\bT^{\T}\begin{bmatrix}
\bM_{11}\bcP_1 \bM_{11} & 0\\
0 & 0
\end{bmatrix}\bT\\
&=-\bP_{\rr}^{\T}\bM\bcP_{\p}\bM \bP_{\rr}.
\end{align*}
such that \eqref{eq:Ly_Q_pp_hat} implies the statement since $\bT$ is a nonsingular matrix.
\end{proof}
\Cref{theo:LyQ_pp} states that we can calculate Gramians by solving certain projected Lyapunov equations. 
Methods to solve projected Lyapunov equations can be found, e.g., in \cite{Sty02,Sty08,StyS12}.

\subparagraph*{Improper-proper output}
Now we consider the third output component $\by_{\ii\p}(t) = \bx_{\ii}(t)^{\T}\bM \bx_{\p}(t)$.
We define the state--to--output mapping $\bcO_{\ii\p}(t, k) := \bB ^{\T}\bF_N(k)^{\T}\bM \bF_J(t)$ that is used to describe the observability corresponding to $\by_{\ii\p}(t)$.
We note that we can also consider the transposed kernel to derive an improper Gramian, which is done later in Subsection~\ref{sec:Gramians_improper}.
However, we derive a Gramian that encodes the observability properties of the proper component of the output $\by_{\ii\p}(t)$.
We define the improper-proper observability Gramian as 
\begin{align*}
\bcQ_{\ii\p}&:= \sum_{k=0}^{\nu-1}\int_{0}^{\infty}\bcO_{\ii\p}(t, k)^{\T}\bcO_{\ii\p}(t, k)\dd t
=\sum_{k=0}^{\nu-1}\int_{0}^{\infty}\bF_J(t)^{\T}\bM \bF_N(k)\bB \bB ^{\T}\bF_N(k)^{\T}\bM \bF_J(t) \dd t\\
&=\int_{0}^{\infty} \bF_J(t)^{\T}\bM\bcP_{\ii}\bM \bF_J(t) \dd t.
\end{align*}
\begin{definition}\label{def:Qip}
Consider the asymptotically stable DAE system with a quadratic output equation from \eqref{eq:DAE_q} and the corresponding improper controllability Gramian $\bcP_{\ii}$ as defined in \eqref{eq:Gramian_contr}.
The improper-proper observability Gramian $\bcQ_{\ii\p}$ corresponding to the output $\by_{\ii\p}$ is defined as
\begin{align*}
\bcQ_{\ii\p}:=\int_{0}^{\infty} \bF_J(t)^{\T}\bM\bcP_{\ii}\bM \bF_J(t) \dd t
\end{align*}
where $\bF_J(t)$ is defined as in \eqref{eq:F_J_F_N}.
\end{definition}
The following theorem describes how the Gramian $\bcQ_{\ii\p}$ can be computed in practice.
\begin{theorem}\label{theo:LyQ_ip}
Consider the asymptotically stable DAE system with a quadratic output equation from \eqref{eq:DAE_q} and the corresponding improper controllability Gramian $\bcP_{\ii}$ as defined in \eqref{eq:Gramian_contr}.
The improper-proper observability Gramian $\bcQ_{\ii\p}$ solves the projected Lyapunov equation
\[
\bE^{\T} \bcQ_{\ii\p} \bA + \bA^{\T} \bcQ_{\ii\p} \bE = -\bP_{\rr}^{\T}\bM\bcP_{\ii}\bM \bP_{\rr},\quad \bcQ_{\ii\p} = \bP_{\Ll}^{\T}\bcQ_{\ii\p}\bP_{\Ll},
\]
where the projection matrices $\bP_{\Ll}$ and $\bP_{\rr}$ are defined as in \eqref{eq:Proj}.
\end{theorem}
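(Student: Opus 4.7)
My plan is to mirror the structure of the proof of Theorem~\ref{theo:LyQ_pp} (proper-proper case), since the statement is structurally identical: a projected Lyapunov equation whose right-hand side is built from a controllability Gramian sandwiched by $\bM$. The key observation is that although $\bcQ_{\ii\p}$ is defined with the improper Gramian $\bcP_{\ii}$ inside, the outer operators $\bF_J(t)$ still project onto the finite-eigenvalue deflating subspace, so $\bcQ_{\ii\p}$ has the same block-structure in Weierstra\ss{} coordinates as $\bcQ_{\p\p}$, only with a different $(1,1)$ entry.

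First, I would substitute the definitions of $\bF_J(t)$ from \eqref{eq:F_J_F_N} and of $\bcP_{\ii}$ from \eqref{eq:Contr_Gram}, together with the block form of $\bM$, into the integral defining $\bcQ_{\ii\p}$. A direct block multiplication gives
\[
\bcQ_{\ii\p} \;=\; \bW^{-\T}\begin{bmatrix} \bQ_{11}^{\ii\p} & 0 \\ 0 & 0 \end{bmatrix}\bW^{-1}, \qquad
\bQ_{11}^{\ii\p} := \int_{0}^{\infty} e^{\bJ^{\T} t}\,\bM_{12}\,\bcP_2\,\bM_{12}^{\T}\, e^{\bJ t}\,\dd t,
\]
because the zero blocks in $\bcP_{\ii}$ kill the contributions of $\bM_{11}, \bM_{22}$ and isolate the coupling block $\bM_{12}$. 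This block form immediately yields the projection condition $\bcQ_{\ii\p}=\bP_{\Ll}^{\T}\bcQ_{\ii\p}\bP_{\Ll}$ by the definition of $\bP_{\Ll}$ in \eqref{eq:Proj}.

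Next, I would verify that $\bQ_{11}^{\ii\p}$ solves the standard Lyapunov equation
\[
\bJ^{\T}\bQ_{11}^{\ii\p} + \bQ_{11}^{\ii\p}\bJ \;=\; -\bM_{12}\,\bcP_2\,\bM_{12}^{\T},
\]
by the same telescoping argument as in the proof of Theorem~\ref{theo:LyQ_pp}: differentiating $e^{\bJ^{\T} t}\bM_{12}\bcP_2\bM_{12}^{\T}e^{\bJ t}$, integrating over $[0,\infty)$, and using the asymptotic stability of $\bJ$ so the boundary term at infinity vanishes. Then I would substitute the Weierstra\ss{} forms of $\bE$ and $\bA$ into $\bE^{\T}\bcQ_{\ii\p}\bA + \bA^{\T}\bcQ_{\ii\p}\bE$; the nilpotent block $\bN$ gets multiplied against the zero block of $\bcQ_{\ii\p}$, so the left-hand side reduces to $\bT^{\T}\bigl[\begin{smallmatrix}\bJ^{\T}\bQ_{11}^{\ii\p}+\bQ_{11}^{\ii\p}\bJ & 0 \\ 0 & 0\end{smallmatrix}\bigr]\bT$.

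The only step requiring a bit of care will be identifying the right-hand side: I need to show that $\bP_{\rr}^{\T}\bM\bcP_{\ii}\bM\bP_{\rr}$, when written in Weierstra\ss{} coordinates, equals $\bT^{\T}\bigl[\begin{smallmatrix}\bM_{12}\bcP_2\bM_{12}^{\T} & 0 \\ 0 & 0\end{smallmatrix}\bigr]\bT$. This follows from a block multiplication: $\bP_{\rr}$ annihilates all but the $(1,1)$ block of $\bM\bcP_{\ii}\bM$, and $\bcP_{\ii}$ being supported only in the $(2,2)$ block forces the surviving entry to be the cross-term $\bM_{12}\bcP_2\bM_{12}^{\T}$ rather than any diagonal one. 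Combining this with the nonsingularity of $\bT$, the projected Lyapunov equation of the theorem follows, completing the argument.
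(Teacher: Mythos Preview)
Your proposal is correct and is precisely the argument the paper intends: the paper's own proof simply states that it ``follows the same argumentation as for \Cref{theo:LyQ_pp}'', and what you have written is exactly that argument carried out with $\bcP_{\ii}$ (hence $\bcP_2$ and the coupling block $\bM_{12}$) in place of $\bcP_{\p}$ (and $\bcP_1$, $\bM_{11}$). Your identification of the $(1,1)$ block $\bQ_{11}^{\ii\p}=\int_0^\infty e^{\bJ^{\T}t}\bM_{12}\bcP_2\bM_{12}^{\T}e^{\bJ t}\,\dd t$ and of the right-hand side $\bP_{\rr}^{\T}\bM\bcP_{\ii}\bM\bP_{\rr}=\bT^{\T}\bigl[\begin{smallmatrix}\bM_{12}\bcP_2\bM_{12}^{\T} & 0\\ 0 & 0\end{smallmatrix}\bigr]\bT$ are both correct.
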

\begin{proof}
The proof follows the same argumentation as for \Cref{theo:LyQ_pp}.
\end{proof}

\subparagraph*{Joined proper observability Gramian.}
We can combine the two proper output Gramians to obtain a Gramian that covers the observability of the proper states independent of the second state, that is the observability of the output $\by_{\p}(t) = \bx(t)^{\T}\bM \bx_{\p}(t)$ for an arbitrary state $\bx(t)$, generated by the model in \eqref{eq:DAE_q}.
Since the sum $\bcP_{\p} + \bcP_{\ii}$ spans the full controllability space of the state $\bx(t)$, the proper observability Gramian corresponding to both proper and improper left states is given by
\begin{align*}
\bcQ_{\p} &= \int_0^{\infty} \bF_J(t)^{\T}\bM(\bcP_{\p} + \bcP_{\ii})\bM \bF_J(t)\dd t\\
&= \int_0^{\infty} \bW^{-\T}\begin{bmatrix}
e^{\bJ^{\T}t} & 0 \\ 0 & 0
\end{bmatrix}\begin{bmatrix}
\bM_{11} & \bM_{12} \\ \bM_{12}^{\T} & \bM_{22}
\end{bmatrix}\begin{bmatrix}
\bcP_1 & 0 \\ 0 & \bcP_2
\end{bmatrix}\begin{bmatrix}
\bM_{11} & \bM_{12} \\ \bM_{12}^{\T} & \bM_{22}
\end{bmatrix}\begin{bmatrix}
e^{\bJ t} & 0 \\ 0 & 0
\end{bmatrix}\bW^{-1}\dd t\\
&=  \bW^{-\T}\begin{bmatrix}
\bQ_{11} + \bQ_{21} & 0 \\ 0 & 0
\end{bmatrix}\bW^{-1} = \bcQ_{\p\p} + \bcQ_{\ii\p}.
\end{align*}
We summarize this subsection with the following definition.
\begin{definition}\label{def:propGram}
Consider the asymptotically stable DAE system with a quadratic output equation from \eqref{eq:DAE_q} and the corresponding proper and improper controllability Gramians $\bcP_{\p}$ and $\bcP_{\ii}$ as defined in \eqref{eq:Gramian_contr}.
The proper observability Gramian $\bcQ_{\p}$ corresponding to the output $\by_{\p}=\by_{\p\p}(t) + \by_{\ii\p}(t)$ is defined as
\[
\bcQ_{\p} := \bcQ_{\p\p} + \bcQ_{\ii\p}
\]
with $\bcQ_{\p\p}$ and $\bcQ_{\ii\p}$ as in the \Cref{def:Qpp} and \ref{def:Qip}.   
\end{definition}
The summed proper Gramian $\bcQ_{\p}$ provides a criterion describing the states which are easiest and hardest to observe.
We will show later in \Cref{ssec:Reduction_input} that the hard to observe states are connected to the smallest non-zero singular values of the Gramian and thus serve as a truncation criterion to reduce the full-order model.

\subsection{Improper observability Gramians}\label{sec:Gramians_improper}
In this subsection, we investigate the observability behavior of the outputs $\by_{\p\ii}:= \bx_{\p}(t)^{\T} \bM \bx_{\ii}(t)$ and $\by_{\ii\ii}:= \bx_{\ii}(t)^{\T}\bM \bx_{\ii}(t)$.
Both outputs describe the observability of the improper state $\bx_{\ii}(t)$ corresponding to a proper and an improper state multiplied from the left.

\subparagraph*{Proper-improper output}

We describe $\by_{\ii\p}(t)$ so that we derive an improper Gramian in this subsection.
This just means that we derive a Gramian that encodes the observability of the improper component of $\by_{\p\ii}(t)$.
We identify the improper controllability mapping $\bcC_{\ii}(k) = \bF_N(k)\bB $ and the remaining observability mapping $\bcO_{\p\ii}(t, k)=\bB ^{\T}\bF_J(t)^{\T}\bM \bF_N(k)$ which is used to define the proper-improper observability Gramian corresponding to the state $\bx_{\ii}(t)$ and the output $\by_{\p\ii}(t)$ as
\begin{align*}
\bcQ_{\p\ii} &= \int_0^{\infty}\sum_{k=0}^{\nu-1} \bcO_{\p\ii}(t, k)^{\T}\bcO_{\p\ii}(t, k)\dd t
= \int_0^{\infty}\sum_{k=0}^{\nu-1}\bF_N(k)^{\T} \bM \bF_J(t)\bB \bB ^{\T}\bF_J(t)^{\T}\bM \bF_N(k)\dd t\\
&= \sum_{k=0}^{\nu-1}\bF_N(k)^{\T} \bM\bcP_{\p}\bM \bF_N(k).
\end{align*}
This results in the following definition.
\begin{definition}\label{def:Qpi}
Consider the asymptotically stable DAE system with a quadratic output equation from \eqref{eq:DAE_q} and the corresponding proper controllability Gramian $\bcP_{\p}$ as defined in \eqref{eq:Gramian_contr}.
The proper-improper observability Gramian $\bcQ_{\p\ii}$ corresponding to the output $\by_{\p\ii}$ is defined as 
\[
\bcQ_{\p\ii} = \sum_{k=0}^{\nu-1}\bF_N(k)^{\T} \bM\bcP_{\p}\bM \bF_N(k),
\]
where $\bF_N(k)$ is defined as in \eqref{eq:F_J_F_N}.
\end{definition}
We insert the mapping $\bF_N(k)$ and obtain that the improper observability Gramian $\bcQ_{\p\ii}$ can be written as
\[
\bcQ_{\p\ii} = \bW^{-\T}\begin{bmatrix}
0 & 0 \\
0 & \bQ_{12}
\end{bmatrix}\bW^{-1}
\]
where 
\begin{equation}\label{eq:Q12}
\bQ_{12}:=\sum_{k=0}^{\nu-1}(-\bN^k)^{\T} \bM_{12}^{\T}\bcP_1\bM_{12} (-\bN^k)
\end{equation} is the proper-improper observability Gramian corresponding to the improper state $\bx_2(t)$ from the transformed model in \eqref{eq:DAE_q_WCF}.
This describes a relation between the Weiserstra{\ss}-canonical form \eqref{eq:DAE_q_WCF} and the full-order model in \eqref{eq:DAE_q} in this observability Gramain $\bcQ_{\p\ii}$.

\begin{theorem}\label{theo:LyQ_pi}
Consider the asymptotically stable DAE system with a quadratic output equation from \eqref{eq:DAE_q} and the corresponding proper controllability Gramian $\bcP_{\p}$ as defined in \eqref{eq:Gramian_contr}.
The proper-improper observability Gramian $\bcQ_{\p\ii}$ solves the projected generalized discrete-time Lyapunov equation
\[
\bA^{\T} \bcQ_{\p\ii} \bA - \bE^{\T} \bcQ_{\p\ii} \bE = (\bI-\bP_{\rr}^{\T})\bM\bcP_{\p}\bM(\bI-\bP_{\rr}),\quad \bP_{\Ll}^{\T}\bcQ_{\p\ii}\bP_{\Ll} = 0
\]
where the projection matrices $\bP_{\Ll}$ and $\bP_{\rr}$ are defined as in \eqref{eq:Proj}.
\end{theorem}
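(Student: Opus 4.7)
The plan is to mirror the argument used for \Cref{theo:LyQ_pp}: first verify the analogous result at the level of the Weierstra{\ss}-canonical form, then lift it to the original coordinates using the block structure that $\bcQ_{\p\ii}$ already has.

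First I would work at the level of the block $\bQ_{12}$ defined in \eqref{eq:Q12} and prove the discrete-time Lyapunov equation
\begin{equation*}
\bQ_{12} - \bN^{\T}\bQ_{12}\bN = \bM_{12}^{\T}\bcP_1\bM_{12}.
\end{equation*}
The key observation is that the cross signs cancel, so $\bQ_{12} = \sum_{k=0}^{\nu-1}(\bN^k)^{\T}\bM_{12}^{\T}\bcP_1\bM_{12}\bN^k$. Inserting this into $\bN^{\T}\bQ_{12}\bN$ shifts the summation index by one, and because $\bN$ is nilpotent of index $\nu$ the shifted sum truncates at $k=\nu-1$; a standard telescoping then leaves exactly the $k=0$ term $\bM_{12}^{\T}\bcP_1\bM_{12}$. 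This is essentially the only place in the argument where the nilpotency of $\bN$ is used, and I expect it to be the main (though modest) obstacle, in the sense that one must track the sign and the shift carefully.

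Next I would handle the projection condition $\bP_{\Ll}^{\T}\bcQ_{\p\ii}\bP_{\Ll}=0$. Since $\bcQ_{\p\ii} = \bW^{-\T}\operatorname{diag}(0,\bQ_{12})\bW^{-1}$ and $\bP_{\Ll} = \bW\operatorname{diag}(\bI_{n_f},0)\bW^{-1}$, a direct multiplication collapses the expression to zero; this requires no more than expanding the block product.

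Finally I would establish the main identity by substituting the Weierstra{\ss}-canonical forms of $\bE$, $\bA$ and $\bcQ_{\p\ii}$ on the left-hand side. The factor $\bW^{-\T}\,\cdot\,\bW^{-1}$ in $\bcQ_{\p\ii}$ combines with $\bW$ factors from $\bE$ and $\bA$, leaving
\begin{equation*}
\bA^{\T}\bcQ_{\p\ii}\bA - \bE^{\T}\bcQ_{\p\ii}\bE \;=\; \bT^{\T}\operatorname{diag}\!\bigl(0,\;\bQ_{12}-\bN^{\T}\bQ_{12}\bN\bigr)\bT,
\end{equation*}
and by the first step the inner block equals $\bM_{12}^{\T}\bcP_1\bM_{12}$. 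On the right-hand side, I would expand $(\bI-\bP_{\rr}) = \bT^{-1}\operatorname{diag}(0,\bI_{n_\infty})\bT$, use the block form of $\bM$ and the block form $\bcP_{\p} = \bT^{-1}\operatorname{diag}(\bcP_1,0)\bT^{-\T}$, and observe that the outer zero-blocks annihilate every term of the product except $\bM_{12}^{\T}\bcP_1\bM_{12}$ in the lower-right corner. Matching the two sides and invoking the nonsingularity of $\bT$ finishes the proof.
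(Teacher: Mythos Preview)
Your proposal is correct and follows essentially the same approach as the paper: first establish the discrete-time Lyapunov equation for $\bQ_{12}$ via a telescoping sum exploiting the nilpotency of $\bN$, then verify the projection condition by direct block multiplication, and finally lift both to the original coordinates using the Weierstra{\ss}-canonical forms of $\bE$, $\bA$, $\bP_{\rr}$, and $\bcP_{\p}$. The paper's proof is organized in exactly this order and relies on the same computations.
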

\begin{proof}
We first show that the Gramian $\bQ_{12}$ defined in \eqref{eq:Q12} solves the discrete-time Lyapunov equation 
\begin{equation}\label{eq:Ly_Q_pi_hat}
\bQ_{12} - \bN^{\T}\bQ_{12}\bN = \bM_{12}^{\T}\bcP_1\bM_{12}.
\end{equation}
That follows if we insert the definition of $\bQ_{12}$ into \eqref{eq:Ly_Q_pi_hat}.
This results in 
\begin{align*}
\sum_{k=0}^{\nu-1}(-\bN^k)^{\T} \bM_{12}^{\T}\bcP_1 \bM_{12} (-\bN^k) - \sum_{k=0}^{\nu-1}(-\bN^{k+1})^{\T} \bM_{12}^{\T}\bcP_1 \bM_{12} (-\bN^{k+1}) &= (-\bN^0)^{\T} \bM_{12}^{\T}\bcP_1\bM_{12} (-\bN^0) \\&= \bM_{12}^{\T}\bcP_1\bM_{12}
\end{align*}
since $\bN$ has the nilpotency index $\nu-1$, i.e., $\bN^{\nu}=0$.

To prove the projection condition, we derive
\begin{align*}
\bP_{\Ll}^{\T}\bcQ_{\p\p}\bP_{\Ll} = \bW^{-\T}\begin{bmatrix}
\bI & 0\\
0 & 0
\end{bmatrix}\bW^{\T}\bW^{-\T}\begin{bmatrix}
0 & 0 \\
0 & \bQ_{12}
\end{bmatrix}\bW^{-1}\bW\begin{bmatrix}
\bI & 0\\
0 & 0
\end{bmatrix}\bW^{-1}=0.
\end{align*}
To finalize the proof, we insert the Weierstra{\ss}-canonical form of $\bE$ and $\bA$ and the definition of $\bP_{\rr}$ into the remaining Lyapunov equation to obtain 
\begin{align*}
\bT^{\T}\begin{bmatrix}
\bJ^{\T} & 0 \\
0 & \bI
\end{bmatrix}\begin{bmatrix}
0 & 0 \\
0 & \bQ_{12}
\end{bmatrix}\begin{bmatrix}
\bJ & 0 \\
0 & \bI
\end{bmatrix}\bT - \bT^{\T}\begin{bmatrix}
\bI & 0 \\
0 & \bN^{\T}
\end{bmatrix}\begin{bmatrix}
0 & 0 \\
0 & \bQ_{12}
\end{bmatrix}&\begin{bmatrix}
\bI & 0 \\
0 & \bN
\end{bmatrix}\bT \\
&=\bT^{\T}\begin{bmatrix}
0 & 0 \\
0 & \bQ_{12}-\bN^{\T} \bQ_{12} \bN
\end{bmatrix}\bT\\
&=\bT^{\T}\begin{bmatrix}
0 & 0 \\
0 & \bM_{12}^{\T}\bcP_1\bM_{12}
\end{bmatrix}\bT\\
&= (\bI-\bP_{\rr}^{\T})\bM\bcP_{\p}\bM(\bI-\bP_{\rr})
\end{align*}
which proves the statement.
\end{proof}

\subparagraph*{Improper-improper output}
Now, we consider the fourth and last component of the output that describes the observability space of the improper state $\bx_{\ii}(t)$ if the second state is also improper.
We identify the state--to--output mapping $\bcO_{\ii\ii}(k, \ell)=\bB ^{\T}\bF_N(k)^{\T} \bM \bF_N(\ell)$.
Based on this mapping $\bcO_{\ii\ii}(k, \ell)$ we define the observability Gramian $\bcQ_{\ii\ii}$ as
\begin{align*}
\bcQ_{\ii\ii} &:= \sum_{k=0}^{\nu-1}\sum_{\ell=0}^{\nu-1} \bcO(k, \ell)^{\T}\bcO(k, \ell)
= \sum_{k=0}^{\nu-1}\sum_{\ell=0}^{\nu-1}\bF_N(\ell)^{\T} \bM \bF_N(k)\bB \bB ^{\T}\bF_N(k)^{\T}\bM \bF_N(\ell)\\
&= \sum_{\ell=0}^{\nu-1}\bF_N(\ell)^{\T} \bM\bcP_{\ii}\bM \bF_N(\ell)
\end{align*}
which results in the following definition.
\begin{definition}\label{eq:Qii}
Consider the DAE system with a quadratic output equation from \eqref{eq:DAE_q} and the corresponding improper controllability Gramian $\bcP_{\ii}$ as defined in \eqref{eq:Gramian_contr}.
The improper-improper observability Gramian $\bcQ_{\ii\ii}$ corresponding to the output $\by_{\ii\ii}$ is defined as 
\[
\bcQ_{\ii\ii} = \sum_{\ell=0}^{\nu-1}\bF_N(\ell)^{\T} \bM\bcP_{\ii}\bM \bF_N(\ell),
\]
where $\bF_N(\ell)$ is defined as in \eqref{eq:F_J_F_N}.
\end{definition}
\begin{theorem}\label{theo:LyQ_ii}
Consider the DAE system with a quadratic output equation from \eqref{eq:DAE_q} and the corresponding improper controllability Gramian $\bcP_{\ii}$ as defined in \eqref{eq:Gramian_contr}.
The improper-improper observability Gramian $\bcQ_{\ii\ii}$ solves the projected generalized discrete-time Lyapunov equation
\[
\bA^{\T} \bcQ_{\ii\ii} \bA - \bE^{\T} \bcQ_{\ii\ii} \bE = (\bI-\bP_{\rr}^{\T})\bM\bcP_{\ii}\bM(\bI-\bP_{\rr}),\quad \bP_{\Ll}^{\T}\bcQ_{\ii\ii}\bP_{\Ll} = 0
\]
where $\bP_{\Ll}$ and $\bP_{\rr}$ are defined as in \eqref{eq:Proj}.
\end{theorem}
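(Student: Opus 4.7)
The plan is to mirror the proof of Theorem~\ref{theo:LyQ_pi}, exchanging the role of the proper controllability Gramian $\bcP_{\p}$ for the improper one $\bcP_{\ii}$, and the coupling block $\bM_{12}$ for $\bM_{22}$. First I would insert the explicit form of $\bF_N(\ell)$ from \eqref{eq:F_J_F_N} and $\bcP_{\ii}$ from \eqref{eq:Contr_Gram} into \Cref{eq:Qii} to obtain the Weierstra{\ss}-form expression
\[
\bcQ_{\ii\ii} = \bW^{-\T}\begin{bmatrix} 0 & 0 \\ 0 & \bQ_{22} \end{bmatrix}\bW^{-1},\qquad \bQ_{22} := \sum_{\ell=0}^{\nu-1}(\bN^{\ell})^{\T}\bM_{22}\bcP_{2}\bM_{22}\bN^{\ell}.
\]
This reduces the problem to showing that the small block $\bQ_{22}$ satisfies the discrete-time Lyapunov equation
\[
\bQ_{22} - \bN^{\T}\bQ_{22}\bN = \bM_{22}\bcP_{2}\bM_{22}.
\]

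The second step is to verify this block identity by the same telescoping argument used for $\bQ_{12}$: substituting the series expansion into the left-hand side and shifting the index $k\mapsto k+1$ in the second sum, all interior terms cancel and only the $k=0$ term $(-\bN^0)^{\T}\bM_{22}\bcP_{2}\bM_{22}(-\bN^0)=\bM_{22}\bcP_{2}\bM_{22}$ survives, using the nilpotency $\bN^{\nu}=0$ to discard the tail term.

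Next, I would check the projection condition $\bP_{\Ll}^{\T}\bcQ_{\ii\ii}\bP_{\Ll}=0$ by direct block multiplication: inserting the definition of $\bP_{\Ll}$ from \eqref{eq:Proj} collapses the outer blocks to $\begin{bmatrix} \bI & 0 \\ 0 & 0\end{bmatrix}\begin{bmatrix} 0 & 0 \\ 0 & \bQ_{22}\end{bmatrix}\begin{bmatrix} \bI & 0 \\ 0 & 0\end{bmatrix}=0$, which is immediate.

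Finally, I would lift the small-block identity back to the full matrix Lyapunov equation by multiplying on the left by $\bT^{\T}$ and on the right by $\bT$, and inserting the Weierstra{\ss} decompositions of $\bA$ and $\bE$. The product $\bA^{\T}\bcQ_{\ii\ii}\bA-\bE^{\T}\bcQ_{\ii\ii}\bE$ collapses to $\bT^{\T}\begin{bmatrix} 0 & 0 \\ 0 & \bQ_{22}-\bN^{\T}\bQ_{22}\bN\end{bmatrix}\bT$, and the right-hand side $(\bI-\bP_{\rr}^{\T})\bM\bcP_{\ii}\bM(\bI-\bP_{\rr})$ becomes $\bT^{\T}\begin{bmatrix} 0 & 0 \\ 0 & \bM_{22}\bcP_{2}\bM_{22}\end{bmatrix}\bT$ after a routine block computation using the definition of $\bP_{\rr}$ and the partitioning of $\bM$ and $\bcP_{\ii}$. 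Equating the two yields the claim, with $\bT$ nonsingular. I do not expect a genuine obstacle here; the only care needed is to correctly track which off-diagonal $\bM$ blocks drop out under the $(\bI-\bP_{\rr})$ projection, which is why the $\bM_{22}$ block (rather than $\bM_{12}$) survives in contrast to Theorem~\ref{theo:LyQ_pi}.
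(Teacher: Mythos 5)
Your proposal is correct and takes essentially the same approach as the paper: the paper's own proof of this theorem simply declares it analogous to the proof of Theorem~\ref{theo:LyQ_pi}, and your argument is exactly that analogue, reducing to the block identity $\bQ_{22}-\bN^{\T}\bQ_{22}\bN=\bM_{22}\bcP_2\bM_{22}$ with $\bQ_{22}=\sum_{\ell=0}^{\nu-1}(\bN^{\ell})^{\T}\bM_{22}\bcP_2\bM_{22}\bN^{\ell}$, verifying it by the telescoping/nilpotency argument, checking the projection condition by block multiplication, and lifting via the Weierstra{\ss} form with $\bT$ nonsingular. Your closing observation that $\bM_{22}$ (rather than $\bM_{12}$) survives under the $(\bI-\bP_{\rr})$ projection is also the correct bookkeeping.
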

\begin{proof}
The proof is similar to the proof of \Cref{theo:LyQ_pi}.
\end{proof}

\subparagraph*{Jointed improper observability Gramain}
We can combine the two improper output Gramians to obtain an improper Gramian that covers the observability of an improper state independent of the second state, that is, the observability of the output $\by_{\ii}(t) = \bx(t)^{\T}\bM \bx_{\ii}(t)$ for an arbitrary state $\bx(t)$ generated by system \eqref{eq:DAE_q}.
Since the sum $\bcP_{\p} + \bcP_{\ii}$ spans the full controllability space of the state $\bx(t)$, the improper observability Gramian corresponding to both proper and improper left states is given by
\begin{align*}
\bcQ_{\ii} &= \sum_{k=0}^{\nu -1} \bF_N(t)^{\T}\bM(\bcP_{\p} + \bcP_{\ii})\bM \bF_N(t)\\
&= \sum_{k=0}^{\nu -1} \bW^{-\T}\begin{bmatrix}
0 & 0 \\ 0 & -(\bN^k)^{\T}
\end{bmatrix}\begin{bmatrix}
\bM_{11} & \bM_{12} \\ \bM_{12}^{\T} & \bM_{22}
\end{bmatrix}\begin{bmatrix}
\bcP_1 & 0 \\ 0 & \bcP_2
\end{bmatrix}\begin{bmatrix}
\bM_{11} & \bM_{12} \\ \bM_{12}^{\T} & \bM_{22}
\end{bmatrix}\begin{bmatrix}
0 & 0 \\ 0 & -\bN^k
\end{bmatrix}\bW^{-1}\\
&= \bW^{-\T}\begin{bmatrix}
0 & 0 \\ 0 & \bQ_{12} + \bQ_{22}
\end{bmatrix}\bW^{-1} = \bcQ_{\p\ii} + \bcQ_{\ii\ii}.
\end{align*}
We summarize this subsection with the following definition.
\begin{definition}\label{def:impropGram}
Consider the DAE system with a quadratic output equation from \eqref{eq:DAE_q} and the corresponding proper and improper controllability Gramians $\bcP_{\p}$ and $\bcP_{\ii}$ as defined in \eqref{eq:Gramian_contr}.
The improper observability Gramian $\bcQ_{\ii}$ corresponding to the output $\by_{\ii}$ is defined as 
\[
\bcQ_{\ii} := \bcQ_{\p\ii} + \bcQ_{\ii\ii}
\]
where the Gramians $\bcQ_{\p\ii}$ and $\bcQ_{\ii\ii}$ are defined as in the \Cref{def:Qpi} and \ref{eq:Qii}.
\end{definition}

For the improper case, there are no energy functionals describing the connection of hard to observe states and the singular values of $\bcQ_{\ii}$.
However, improper states that lie in the kernel of $\bcQ_{\ii}$ on the subspace $\mathrm{ker}(\bP_{\Ll})$ are unobservable and, hence, can be removed from the dynamics without changing the input to output behavior. Therefore, the states corresponding to zero singular values of $\bcQ_{\ii}$ are removed in the reduction method presented in \Cref{ssec:BT}.

\section{Kernel functions and balanced truncation}\label{sec:Reduction}
In the previous section, we have proposed Gramians that describe the proper and improper controllability and observability spaces. 
In this section, the goal is to propose a BT method based on those Gramians. 
The proper and the improper states are considered separately.
We extend the methodology presented in \cite{morMehS05} to address systems with quadratic output equations.
For that, we first derive controllability and observability energies in \Cref{ssec:Reduction_input} that are used in \Cref{ssec:BT} to generate a reduced surrogate model that approximates the input--to--output behavior of the full-order model.

\subsection{Energy norms of kernel functions}\label{ssec:Reduction_input}

In standard BT theory, energy functionals are investigated to provide a criterion that states to truncate in the reduction step.
To evaluate the output energy, the $L_2$-norm of $\by(t)$ for an initial condition $\bx^*$ and a zero input $\bu(t)\equiv 0$ is considered.
However, since we consider consistent initial conditions, a vanishing input implies $\bx_{\ii}(t)\equiv 0$, and hence, $\by(t) = \by_{\p\p}(t)$.
The investigation of the output does not take into account the improper parts of the system and does not represent the complete system dynamics.

Hence, in this section, we investigate the dominant subspaces of the controllability mappings $\bcC_{\p}$ and $\bcC_{\ii}$ and of the observability mappings $\bcO_{\p\p}$, $\bcO_{\p\ii}$, $\bcO_{\ii\p}$ and $\bcO_{\ii\ii}$.
Afterward, in \Cref{ssec:BT}, we truncate the states living in the least important subspaces.

\subsubsection*{Controllability energy}

Consider the controllability mapping $\bcC_{\p}(t)$. We evaluate the energy norm of $\bcC_{\p}$ that is 
\begin{align*}
E(\bcC_{\p}) = \|\bcC_{\p}\|^2 = \trace{\int_{0}^{\infty}\bF_J(t)\bB\bB^{\T}\bF_J(t)^{\T}\dd t}
= \trace{\bcP_{\p}}
=\sigma_1+ \dots+ \sigma_{n_f},
\end{align*}
where $\sigma_1\geq \dots \geq \sigma_{n_f} \geq 0$ are eigenvalues of $\bcP_{\p}$.
Since $\bcP_{\p}$ is symmetric and positive semi-definite, there exists $\bV\in\Rnn$ with $\bV^{\T}\bV = \bI_n$ so that
$\bcP_{\p} = \bV\bSigma\bV^{\T}$ and $\bSigma = \diag{\sigma_1, \dots, \sigma_{n_f},0, \dots}$.
We see that the first $r$ columns of $\bV$ span the most dominant proper controllability subspace since they correspond to the largest eigenvalues of $\bcP_{\p}$ producing the largest energy values.
This observation justifies that in \Cref{ssec:BT}, the states corresponding to the smallest singular values of $\bcP_{\p}$ are truncated to reduce the model.

Similarly, we investigate the energy norm of the improper controllability mapping, that is 
\begin{align*}
E(\bcC_{\ii}) = \|\bcC_{\ii}\|^2 = \trace{\sum_{k = 0}^{\nu - 1}\bF_N(k)\bB\bB^{\T}\bF_N(k)^{\T}}
= \trace{\bcP_{\ii}}
=\theta_1+ \dots+ \theta_{n_{\infty}}.
\end{align*}
Again, because of the positive semi-definiteness of $\bcP_{\ii}$, we can decompose the Gramian into $\bcP_{\ii} = \bW\bTheta\bW^{\T}$ where J$\bTheta = \diag{\theta_1, \dots, \theta_{n_{\infty}}, 0\dots}$ includes the eigenvalues of $\bcP_{\ii}$ and $\bW^{\T}\bW=\bI$. 
As stated in \cite{morMehS05}, truncating states corresponding to small singular values of the improper Gramians already leads to inaccurate approximations since the non-zero singular values describe constraints to the model, and hence truncation of those could lead to physically meaningless results.
However, states corresponding to zero singular values of the improper Gramians correspond to unreachable improper states and can be truncated without changing the input--to--output behavior.

\subsubsection*{Observability energy}
To investigate the observability energies, we first gather the proper and improper observability mappings as 
\[
\bcO_{\p}(k, t_1, t_2):=\begin{bmatrix}
\bcO_{\p\p}(t_1, t_2)\\ \bcO_{\ii\p}(k, t_2)
\end{bmatrix}, \qquad 
\bcO_{\ii}(\ell, k, t):=\begin{bmatrix}
\bcO_{\p\ii}(\ell, t)\\ \bcO_{\ii\ii}(\ell, k)
\end{bmatrix}.
\]
We follow the same methodology as above and evaluate the energy norm of the proper observability mapping, which yields
\begin{align*}
E(\bcO_\p):=&\|\bcO_{\p}\|^2 = \trace{\sum_{k = 0}^{\nu-1}\int_{0}^{\infty}\int_{0}^{\infty}\bcO_{\p}(k, t_1, t_2)^{\T}\bcO_{\p}(k, t_1, t_2)\dd t_1 \dd t_2}\\
=&\trace{\int_{0}^{\infty}\int_{0}^{\infty}\bcO_{\p\p}(t_1, t_2)^{\T}\bcO_{\p\p}(t_1, t_2) \dd t_1 \dd t_2
+\sum_{k = 0}^{\nu-1}\int_{0}^{\infty} \bcO_{\ii\p}(k, t_2)^{\T}\bcO_{\ii\p}(k, t_2)\dd t_2
}\\
=&\trace{\bcQ_{\p\p}} + \trace{\bcQ_{\ii\p}} = \trace{\bcQ_{\p}}
\end{align*}
with $\bcQ_{\p}$ as defined in \Cref{def:propGram}.
Accordingly, the improper energy norm is defined as 
\begin{align*}
E(\bcO_\ii):=&\|\bcO_{\ii}\|^2 = \trace{\sum_{\ell = 0}^{\nu-1}\sum_{k = 0}^{\nu-1}\int_{0}^{\infty}\bcO_{\ii}(\ell,k, t)^{\T}\bcO_{\ii}(\ell, k, t)\dd t}\\
=&\trace{\sum_{k = 0}^{\nu-1}\int_{0}^{\infty}\bcO_{\p\ii}(k, t)^{\T}\bcO_{\p\ii}(k, t)\dd t
+\sum_{\ell = 0}^{\nu-1}\sum_{k = 0}^{\nu-1}\bcO_{\ii\ii}(\ell,k)^{\T}\bcO_{\ii\ii}(\ell,k)
}\\
=& \trace{\bcQ_{\p\ii}} + \trace{\bcQ_{\ii\ii}} = \trace{\bcQ_{\ii}},
\end{align*}
where $\bcQ_{\ii}$ is the improper observability Gramian as defined in \Cref{def:impropGram}.
Consequently, in the BT method in the following section, we truncate subspaces corresponding to small eigenvalues of $\bcQ_{\p}$ and zero singular values of $\bcQ_{\ii}$.

\subsection{Balanced truncation}\label{ssec:BT}
In this subsection, we propose an extension of the BT method to the class of systems \eqref{eq:DAE_q}.
The Gramians and energies from the previous subsections provide a criterion that states to truncate.

We aim to truncate states corresponding to small eigenvalues of $\bcP_{\p}$ and $\bcQ_{\p}$.
However, in general, the states corresponding to small eigenvalues of $\bcP_{\p}$ and those corresponding to small eigenvalues of $\bcQ_{\p}$ do not coincide.
Hence, we need to balance the system before we truncate states.
\begin{definition}
The model in \eqref{eq:DAE_q} is called \emph{balanced} if the Gramians that are defined as in \eqref{eq:Gramian_contr} and in \Cref{def:propGram}, and \Cref{def:impropGram} satisfy
\[
\bcP_{\p} = \bcQ_{\p} = \begin{bmatrix}
\bSigma & 0 \\
0 & 0
\end{bmatrix}, \qquad \bcP_{\ii} = \bcQ_{\ii} = \begin{bmatrix}
0 & 0 \\
0 & \bTheta
\end{bmatrix},
\]
where $\bSigma = \diag{\sigma_1, \dots, \sigma_{n_f}}$ and $\bTheta = \diag{\theta_1, \dots, \theta_{n_{\infty}}}$.
\end{definition}
To balance the system \eqref{eq:DAE_q}, we need to define the following (low-rank) factors
\[
\bcP_{\p} = \bR_{\p}\bR_{\p}^{\T}, \quad \bcP_{\ii} = \bR_{\ii}\bR_{\ii}^{\T}, \quad \bcQ_{\p} = \bS_{\p}\bS_{\p}^{\T}, \quad \bcQ_{\ii} = \bS_{\ii}\bS_{\ii}^{\T}.
\]
Using these factors, we can compute the following singular value decompositions
\begin{align*}
\bL_{\p}\bE \bR_{\p} &= \bU_{\p}\bSigma \bV_{\p}^{\T} = \begin{bmatrix}
\bU_{\p,1} & \bU_{\p,2}
\end{bmatrix}\begin{bmatrix}
\bSigma_1 & \\
& \bSigma_2
\end{bmatrix}\begin{bmatrix}
\bV_{\p,1}^{\T} \\
\bV_{\p,2}^{\T}
\end{bmatrix},\\ \bL_{\ii}\bA \bR_{\ii} &= \bU_{\ii}\bTheta \bV_{\ii}^{\T} = \begin{bmatrix}
\bU_{\ii,1} & \bU_{\ii,2}
\end{bmatrix}\begin{bmatrix}
\bTheta_1 & \\
& 0
\end{bmatrix}\begin{bmatrix}
\bV_{\ii,1}^{\T} \\
\bV_{\ii,2}^{\T}
\end{bmatrix}.
\end{align*}
We can transform the system by the left and right projection matrix
\[
\bW_b = [\bL_{\p}^{\T}\bU_{\p}\bSigma^{-\frac{1}{2}},~\bL_{\ii}^{\T}\bU_{\ii}\bTheta^{-\frac{1}{2}}], \quad 
\bT_b = [\bR_{\p}^{\T}\bV_{\p}\bSigma^{-\frac{1}{2}},~\bR_{\ii}^{\T}\bV_{\ii}\bTheta^{-\frac{1}{2}}]
\]
to obtain a balanced system that has the form
\begin{align*}
\begin{bmatrix}
\bI & 0 \\
0 & \widetilde{\bE}_2
\end{bmatrix}\begin{bmatrix}
\dot{\widetilde{\bx}}_1(t)\\
\dot{\widetilde{\bx}}_2(t)
\end{bmatrix} &= \begin{bmatrix}\widetilde{\bA}_1 & 0 \\
0 & \bI
\end{bmatrix}\begin{bmatrix}
\widetilde{\bx}_1(t)\\
\widetilde{\bx}_2(t)
\end{bmatrix} + \begin{bmatrix}
\widetilde{\bB }_1\\
\widetilde{\bB }_2
\end{bmatrix}\bu(t),\\
\by(t) &= \begin{bmatrix}
\widetilde{\bx}_1(t)^{\T} &\widetilde{\bx}_2(t)^{\T}
\end{bmatrix}\begin{bmatrix}
\widetilde{\bM}_{11} & \widetilde{\bM}_{12}\\
\widetilde{\bM}_{12}^{\T} & \widetilde{\bM}_{22}
\end{bmatrix}\begin{bmatrix}
\widetilde{\bx}_1(t)\\
\widetilde{\bx}_2(t)
\end{bmatrix}.
\end{align*}
We see again a decomposition into a proper state $\widetilde{\bx}_1(t)$ and an improper state $\widetilde{\bx}_2(t)$, where the matrix $ \widetilde{\bE}_2$ is nilpotent.

We reduced the system by truncating the proper states $\widetilde{\bx}_1(t)$ that are most difficult to reach and to observe.
As we have seen in the previous subsection, these states correspond to the smallest singular values in $\bSigma$, i.e. $\bSigma_2$.
The projection matrices that balance the system and truncate these states simultaneously are
\[
\bW_{\rr} = [\bL_{\p}^{\T}\bU_{\p,1}\bSigma_1^{-\frac{1}{2}},~\bL_{\ii}^{\T}\bU_{\ii,1}\bTheta_1^{-\frac{1}{2}}], \quad 
\bT_{\rr} = [\bR_{\p}^{\T}\bV_{\p,1}\bSigma_1^{-\frac{1}{2}},~\bR_{\ii}^{\T}\bV_{\ii,1}\bTheta_1^{-\frac{1}{2}}].
\]
We multiply the full-order model in \eqref{eq:DAE_q} by the projection matrices $\bW_{\rr}$ and $\bT_{\rr}$ to obtain a reduced-order model \eqref{eq:redDAE_q} that is of the form
\begin{align}\label{eq:redSyst}
\begin{split}
\begin{bmatrix}
\bI & 0 \\
0 & \widetilde{\bE}_2
\end{bmatrix}\begin{bmatrix}
\dot{\widehat{\bx}}_1(t)\\
\dot{\widetilde{\bx}}_2(t)
\end{bmatrix} &= \begin{bmatrix}\widehat{\bA}_1 & 0 \\
0 & \bI
\end{bmatrix}\begin{bmatrix}
\widehat{\bx}_1(t)\\
\widetilde{\bx}_2(t)
\end{bmatrix} + \begin{bmatrix}
\widehat{\bB }_1\\
\widetilde{\bB }_2
\end{bmatrix}\bu(t),\\
\widehat{\by}(t) &= \begin{bmatrix}
\widehat{\bx}_1(t)^{\T} &\widetilde{\bx}_2(t)^{\T}
\end{bmatrix}\begin{bmatrix}
\widehat{\bM}_{11} & \widehat{\bM}_{12}\\
\widehat{\bM}_{12}^{\T} & \widehat{\bM}_{22}
\end{bmatrix}\begin{bmatrix}
\widehat{\bx}_1(t)\\
\widetilde{\bx}_2(t)
\end{bmatrix}
\end{split}
\end{align}
where $r$ is the dimension of the reduced proper space.
The reduction by BT is summarized in \Cref{algo:BT_DAEQ}. 
This algorithm follows the same line as the BT method for differential-algebraic systems with linear output presented in \cite{morMehS05}, besides the fact that the observability Gramians are different in our algorithm.

\begin{algorithm}[tb]
	\caption{BT method for for DAE systems with quadratic output.}
	\label{algo:BT_DAEQ}
	\begin{algorithmic}[1]
		\Require{The full-order model \eqref{eq:DAE_q} and the order $r$.}
		\Ensure{The reduced-order model \eqref{eq:redSyst}.}
		\State{Compute the proper and improper controllability Gramians $\bcP_{\p}$ and $\bcP_{\ii}$ by solving the Lyapunov equations in \eqref{eq:Ly_Pp}, \eqref{eq:Ly_Pi}.}
		\State{Compute the proper and improper observability Gramians $\bcQ_{\p}$ and $\bcQ_{\ii}$ by solving the Lyapunov equations from \Cref{theo:LyQ_pp,theo:LyQ_ip,theo:LyQ_pi,theo:LyQ_ii}.}
		\State Perform the singular values decompositions \[
		\bL_{\p}\bE \bR_{\p} = \begin{bmatrix}
		\bU_{\p,1} & \bU_{\p,2}
		\end{bmatrix}\begin{bmatrix}
		\bSigma_1 & \\
		& \bSigma_2
		\end{bmatrix}\begin{bmatrix}
		\bV_{\p,1}^{\T} \\
		\bV_{\p,2}^{\T}
		\end{bmatrix},\quad \bL_{\ii}\bA \bR_{\ii} = \begin{bmatrix}
		\bU_{\ii,1} & \bU_{\ii,2}
		\end{bmatrix}\begin{bmatrix}
		\bTheta_1 & \\
		& 0
		\end{bmatrix}\begin{bmatrix}
		\bV_{\ii,1}^{\T} \\
		\bV_{\ii,2}^{\T}
		\end{bmatrix}.
		\]
		\State Construct the projection matrices \[ \bW_{\rr} = [\bL_{\p}^{\T}\bU_{\p,1}\bSigma_1^{-\frac{1}{2}},~\bL_{\ii}^{\T}\bU_{\ii,1}\bTheta_1^{-\frac{1}{2}}], \quad 
		\bT_{\rr} = [\bR_{\p}^{\T}\bV_{\p,1}\bSigma_1^{-\frac{1}{2}},~\bR_{\ii}^{\T}\bV_{\ii,1}\bTheta_1^{-\frac{1}{2}}].
		\]
		\State Construct reduced matrices
		\begin{align*}
\widehat{\bE}:=\bW_{\rr}^{\T}\bE \bT_{\rr}, \quad \widehat{\bA}:=\bW_{\rr}^{\T}\bA \bT_{\rr}, \quad \widehat{\bB }:=\bW_{\rr}^{\T}\bB , \quad \widehat{\bM}:=\bT_{\rr}^{\T}\bM \bT_{\rr}.
		\end{align*}
	\end{algorithmic}
\end{algorithm}
\begin{remark}
The BT method presented above decouples the proper and improper states as described in \eqref{eq:redSyst} where the proper states are reduced while for the improper states, only a minimal realization is found.
That means that improper states corresponding to zero singular values of the improper Gramians are truncated since they are not reachable or not observable and do not change the input--to--output behavior of the system.
\end{remark}

\section{Error Estimation}\label{sec:ErrEst}
We aim to estimate the error between the output $\by$ and the reduced output $\widehat{\by}$ we obtain when we evaluate the reduced-order model \eqref{eq:redSyst}.
We estimate 
\[
\|\by-\widehat{\by}\|_{L_{\infty}}\leq \|\by_{\p\p}-\widehat{\by}_{\p\p}\|_{L_{\infty}} + \|\by_{\p\ii}-\widehat{\by}_{\p\ii}\|_{L_{\infty}} + \|\by_{\ii\p}-\widehat{\by}_{\ii\p}\|_{L_{\infty}} + \|\by_{\ii\ii}-\widehat{\by}_{\ii\ii}\|_{L_{\infty}}
\]
and consider the four summands separately.
Since we do not truncate the improper states the summand $\|\by_{\ii\ii}-\widehat{\by}_{\ii\ii}\|_{L_{\infty}}$ is equal to zero. The remaining summands are investigated in the following.

\subsection{The proper-proper output error}
In this section, we aim to analyze the error between the proper-proper output $\by_{\p\p}(t)$ and its approximation $\widehat{\by}_{\p\p}(t)$.
To do so, we define 
\begin{equation}\label{eq:hpp}
\bh_{\p\p}(t_1, t_2) := \vecop{\bB ^{\T}\bF_J(t_1)^{\T}\bM \bF_J(t_2)\bB }\quad\text{ and }\quad\widehat{\bh}_{\p\p}(t_1, t_2):=\vecop{\widehat{\bB }^{\T}\widehat{\bF}_J(t_1)^{\T}\widehat{\bM}\widehat{\bF}_J(t_2)\widehat{\bB}},
\end{equation}
where $\widehat{\bF}_J(t):=\begin{bmatrix}
e^{\widehat{\bA}_1t} & 0 \\
0 & 0
\end{bmatrix}$, so that the outputs can be represented as 
\begin{align*}
\by_{\p\p}(t) &=  \bx_{\p}(t)^{\T}\bM \bx_{\p}(t) 
=\int_{0}^{t}\int_{0}^{t}\bh_{\p\p}(t_1, t_2)
(\bu(t_2)\otimes \bu(t_1))\dd t_1 \dd t_2,\\
\widehat{\by}_{\p\p}(t) &=  \widehat{\bx}_{\p}(t)^{\T}\widehat{\bM} \widehat{\bx}_{\p}(t) 
=\int_{0}^{t}\int_{0}^{t}\widehat{\bh}_{\p\p}(t_1, t_2)(\bu(t_2)\otimes \bu(t_1))\dd t_1 \dd t_2.
\end{align*}
Using these representations of $\by_{\p\p}$ and $\widehat{\by}_{\p\p}$ the following lemma provides an upper bound of the $L_{\infty}$-error in the proper proper output.
\begin{lemma}\label{lemma:Lintnorm_pp}
We consider the asymptotically stable DAE system with a quadratic output equation from \eqref{eq:DAE_q}, the reduced-order model in \eqref{eq:redSyst}, and $\bh_{\p\p}(t_1, t_2)$, $\widehat{\bh}_{\p\p}(t_1, t_2)$ as defined in \eqref{eq:hpp}.
Then, the following inequality holds
\begin{align*}
\|\by_{\p\p}-\widehat{\by}_{\p\p}\|_{L_{\infty}}\leq  \left(\int_{0}^{\infty} \int_{0}^{\infty}\|\bh_{\p\p}(t_1, t_2)-\widehat{\bh}_{\p\p}(t_1, t_2)\|_2^2\dd t_1 \dd t_2\right)^{\frac{1}{2}}\|\bu\otimes \bu\|_{L_2}.
\end{align*}
\end{lemma}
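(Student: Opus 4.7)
The plan is to write the error as a double integral against the input, apply a Cauchy--Schwarz inequality in the time variables $(t_1,t_2)$ so as to separate the kernel difference from the input, and then extend the integration range from $[0,t]^2$ to $[0,\infty)^2$ so that the bound becomes independent of $t$ and can survive the $L_\infty$ supremum.

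More concretely, first I would subtract the two representations given just above the lemma to obtain
\begin{equation*}
\by_{\p\p}(t)-\widehat{\by}_{\p\p}(t)
=\int_{0}^{t}\!\int_{0}^{t}
\bigl(\bh_{\p\p}(t_1,t_2)-\widehat{\bh}_{\p\p}(t_1,t_2)\bigr)
\bigl(\bu(t_2)\otimes\bu(t_1)\bigr)\,\dd t_1\,\dd t_2,
\end{equation*}
which is a scalar quantity since the row vector $\bh_{\p\p}-\widehat{\bh}_{\p\p}$ pairs with the column vector $\bu(t_2)\otimes\bu(t_1)$. Taking absolute values, passing them inside the integral, and applying Cauchy--Schwarz on $L_2([0,t]^2)$ yields
\begin{equation*}
|\by_{\p\p}(t)-\widehat{\by}_{\p\p}(t)|
\le
\Bigl(\int_0^t\!\!\int_0^t\|\bh_{\p\p}(t_1,t_2)-\widehat{\bh}_{\p\p}(t_1,t_2)\|_2^{2}\,\dd t_1\dd t_2\Bigr)^{\!\frac12}
\Bigl(\int_0^t\!\!\int_0^t\|\bu(t_2)\otimes\bu(t_1)\|_2^{2}\,\dd t_1\dd t_2\Bigr)^{\!\frac12}.
\end{equation*}

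Next I would enlarge both double integrals from $[0,t]^2$ to $[0,\infty)^2$; this is legal because all integrands are nonnegative. For the input factor I would use the elementary identity $\|v\otimes w\|_2=\|v\|_2\|w\|_2$ to recognize the resulting quantity as $\|\bu\otimes\bu\|_{L_2}$ (which in turn equals $\|\bu\|_{L_2}^{2}$). Because the right-hand side no longer depends on $t$, I can take the supremum over $t\ge 0$ on the left to pass from the pointwise bound to the $L_\infty$ bound claimed in the lemma.

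The only nontrivial issue is checking that the kernel integral $\int_0^\infty\!\int_0^\infty\|\bh_{\p\p}-\widehat{\bh}_{\p\p}\|_2^{2}\,\dd t_1\dd t_2$ is actually finite, so that the extension of the integration domain gives something meaningful; this follows from the asymptotic stability assumption on $s\bE-\bA$ (and correspondingly on $s\widehat{\bE}-\widehat{\bA}$ after BT), which forces exponential decay of $\bF_J(t)$ and $\widehat{\bF}_J(t)$ and hence square-integrability of both kernels. Once this observation is in place, the proof is essentially the Cauchy--Schwarz calculation above and no further ingredients are needed.
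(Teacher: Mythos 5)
Your proposal is correct and follows essentially the same route as the paper's proof: represent the error as a double integral of the kernel difference against $\bu(t_2)\otimes\bu(t_1)$, apply Cauchy--Schwarz on $[0,t]^2$, enlarge the integration domain to $[0,\infty)^2$, and take the supremum over $t$. The extra remark on finiteness of the kernel integral via asymptotic stability is a sensible addition but does not change the argument.
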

\begin{proof}
We consider the output error at time $t\geq 0$ that is
\begin{align*}
\big|\by_{\p\p}(t)-\widehat{\by}_{\p\p}(t)\big| &= \bigg|\int_{0}^{t} \int_{0}^{t}  \bx_{\p}(t_1)^{\T}\bM \bx_{\p}(t_2)
-\widehat{\bx}_{\p}(t_1)^{\T}\widehat{\bM}\widehat{\bx}_{\p}(t_2)\dd t_1 \dd t_2\bigg|\\
&= \bigg|\int_{0}^{t} \int_{0}^{t} \left(\bh_{\p\p}(t-t_1, t-t_2)-\widehat{\bh}_{\p\p}(t-t_1, t-t_2)\right)(\bu(t_2)\otimes \bu(t_1))\dd t_1 \dd t_2\bigg|.
\end{align*}
Applying the Cauchy-Schwarz inequality multiple times yields
\begin{align*}
|\by_{\p\p}(t)-\widehat{\by}_{\p\p}(t)|
&\leq \int_{0}^{t} \int_{0}^{t} \|\left(\bh_{\p\p}(t-t_1, t-t_2)-\widehat{\bh}_{\p\p}(t-t_1, t-t_2)\right)(\bu(t_2)\otimes \bu(t_1))\| \dd t_1 \dd t_2\\
&\leq \int_{0}^{t} \int_{0}^{t} \|\bh_{\p\p}(t_1, t_2)-\widehat{\bh}_{\p\p}(t_1, t_2)\|_2\|(\bu(t_2)\otimes \bu(t_1))\|_2 \dd t_1 \dd t_2\\
&\leq \left(\int_{0}^{t} \int_{0}^{t}\|\bh_{\p\p}(t_1, t_2)-\widehat{\bh}_{\p\p}(t_1, t_2)\|_2^2\dd t_1 \dd t_2\right)^{\frac{1}{2}}\left(\int_{0}^{t} \int_{0}^{t}\|(\bu(t_2)\otimes \bu(t_1))\|_2^2\dd t_1 \dd t_2\right)^{\frac{1}{2}}.
\end{align*}
Hence, we can bound the $L_{\infty}$-norm of the output error as 
\begin{align*}
\|\by_{\p\p}-&\widehat{\by}_{\p\p}\|_{L_{\infty}}\\
&\leq \left(\int_{0}^{\infty} \int_{0}^{\infty}\|\bh_{\p\p}(t_1, t_2)-\widehat{\bh}_{\p\p}(t_1, t_2)\|_2^2\dd t_1 \dd t_2\right)^{\frac{1}{2}}\left(\int_{0}^{\infty} \int_{0}^{\infty}\|(\bu(t_2)\otimes \bu(t_1))\|_2^2\dd t_1 \dd t_2\right)^{\frac{1}{2}}\\
&= \left(\int_{0}^{\infty} \int_{0}^{\infty}\|\bh_{\p\p}(t_1, t_2)-\widehat{\bh}_{\p\p}(t_1, t_2)\|_2^2\dd t_1 \dd t_2\right)^{\frac{1}{2}}\|\bu\otimes \bu\|_{L_2}.
\end{align*}
\end{proof}


\begin{lemma}\label{lemma:traceLintnorm_pp}
We consider the asymptotically stable DAE system with a quadratic output equation from \eqref{eq:DAE_q}, the reduced-order model in \eqref{eq:redSyst}, the corresponding proper controllability Gramian $\bcP_{\p}$ as defined in \eqref{eq:Gramian_contr}, and the reduced proper controllability Gramian 
\[
\widehat{\bcP}_{\p}:= \int_0^{\infty}\begin{bmatrix}
e^{\widehat{\bA}_1t}\widehat{\bB }_1\widehat{\bB }_1^{\T}e^{\widehat{\bA}_1^{\T}t} & 0 \\ 0 & 0
\end{bmatrix}\dd t.
\] The functionals $\bh_{\p\p}(t_1, t_2)$ and $\widehat{\bh}_{\p\p}(t_1, t_2)$ are as defined in \eqref{eq:hpp}.
Then, the following equalities hold
\begin{align}
\int_{0}^{\infty} \int_{0}^{\infty}\|\bh_{\p\p}(t_1, t_2)\|_2^2\dd t_1 \dd t_2 &= \trace{\bcP_{\p}\bM\bcP_{\p}\bM},\label{eq:PPeq1}\\
\int_{0}^{\infty} \int_{0}^{\infty}\|\widehat{\bh}_{\p\p}(t_1, t_2)\|_2^2\dd t_1 \dd t_2 &= \trace{\widehat{\bcP}_{\p}\widehat{\bM}\widehat{\bcP}_{\p}\widehat{\bM}},\label{eq:PPeq2}\\
\int_{0}^{\infty} \int_{0}^{\infty}\langle \bh_{\p\p}(t_1, t_2),\widehat{\bh}_{\p\p}(t_1, t_2)\rangle\dd t_1 \dd t_2 &= \trace{\widetilde{\bcP}_{\p}^{\T}\bM\widetilde{\bP}_{\p}\widehat{\bM}}\label{eq:PPeq3}
\end{align}
where $\widetilde{\bcP}_{\p}:= \int_{0}^{\infty}\bF_J(t)\bB \widehat{\bB}^{\T}\widehat{\bF}_J(t)^{\T}\dd t$ satisfies the projected Sylvester equation 
\begin{align}\label{eq:PPSylv}
\bA\widetilde{\bcP}_{\p} \widehat{\bE}^{\T} + \bE\widetilde{\bcP}_{\p}\widehat{\bA}^{\T} = - \bP_{\Ll} \bB \widehat{\bB}^{\T}\widehat{\bP}_{\Ll}^{\T}, \qquad \widetilde{\bcP}_{\p} = \bP_{\rr} \widetilde{\bcP}_{\p} \widehat{\bP}_{\rr}^{\T}
\end{align}
with $\widehat{\bP}_{\Ll} = \widehat{\bP}_{\rr} = \begin{bmatrix}
\bI_{\rr} & 0 \\ 0 & 0
\end{bmatrix}$, $\bP_{\Ll}$ and $\bP_{\rr}$ as defined in \eqref{eq:Proj}, and $\langle\cdot,\cdot\rangle$ denotes the Euclidean inner product with $\langle v_1,v_2\rangle = v_1^{\T}v_2$ for $v_1,v_2\in \R^{n}$.
\end{lemma}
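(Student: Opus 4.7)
The proof plan rests on the fundamental identity $\vecop{X}^{\T}\vecop{Y} = \trace{X^{\T}Y}$, which converts the integrals of Euclidean norms and inner products of vectorized kernels into traces of products of matrices. I would treat the three equalities in parallel and finish with the Sylvester equation for $\widetilde{\bcP}_{\p}$ separately.

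For \eqref{eq:PPeq1}, I would expand
\[
\|\bh_{\p\p}(t_1,t_2)\|_2^2 = \trace{\bB^{\T}\bF_J(t_2)^{\T}\bM\bF_J(t_1)\bB\bB^{\T}\bF_J(t_1)^{\T}\bM\bF_J(t_2)\bB},
\]
then interchange trace and the inner integral (justified by dominated convergence thanks to asymptotic stability of the finite pencil), recognize $\int_0^\infty \bF_J(t_1)\bB\bB^{\T}\bF_J(t_1)^{\T}\dt = \bcP_{\p}$ from \eqref{eq:Gramian_contr}, and use the cyclic property of trace together with a second application of the same identity in $t_2$ to collapse everything to $\trace{\bcP_{\p}\bM\bcP_{\p}\bM}$. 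Equation \eqref{eq:PPeq2} is obtained in exactly the same way, replacing $\bF_J$, $\bB$, $\bM$ by their reduced counterparts and noticing that the inner integral yields precisely $\widehat{\bcP}_{\p}$. For \eqref{eq:PPeq3} the inner product expands to $\trace{\bB^{\T}\bF_J(t_2)^{\T}\bM\bF_J(t_1)\bB\,\widehat{\bB}^{\T}\widehat{\bF}_J(t_1)^{\T}\widehat{\bM}\widehat{\bF}_J(t_2)\widehat{\bB}}$; integrating in $t_1$ yields $\widetilde{\bcP}_{\p}$ by definition, and integrating in $t_2$ after a cyclic shift yields $\widetilde{\bcP}_{\p}^{\T}$, giving the claimed trace.

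For the projected Sylvester equation I would exploit two structural identities that follow directly from the Weierstra\ss{} form: $\bE\dot{\bF}_J(t)=\bA\bF_J(t)$ and $\widehat{\bE}\dot{\widehat{\bF}}_J(t)=\widehat{\bA}\widehat{\bF}_J(t)$, which are immediate from \eqref{eq:F_J_F_N} upon multiplying by $\bE$ and $\bW^{-1}$ respectively (the nilpotent block is annihilated because $\bF_J$ has a zero lower-right block). Then
\[
\bA\widetilde{\bcP}_{\p}\widehat{\bE}^{\T} + \bE\widetilde{\bcP}_{\p}\widehat{\bA}^{\T} = \bE\!\int_0^{\infty}\!\frac{\dd}{\dd t}\bigl(\bF_J(t)\bB\widehat{\bB}^{\T}\widehat{\bF}_J(t)^{\T}\bigr)\dt\, \widehat{\bE}^{\T} = -\bE\bF_J(0)\bB\widehat{\bB}^{\T}\widehat{\bF}_J(0)^{\T}\widehat{\bE}^{\T},
\]
using again asymptotic stability to kill the boundary term at infinity. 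A short direct computation shows $\bE\bF_J(0)=\bP_{\Ll}$ and $\widehat{\bE}\widehat{\bF}_J(0)=\widehat{\bP}_{\Ll}$, which delivers the inhomogeneity $-\bP_{\Ll}\bB\widehat{\bB}^{\T}\widehat{\bP}_{\Ll}^{\T}$. The projection condition $\widetilde{\bcP}_{\p}=\bP_{\rr}\widetilde{\bcP}_{\p}\widehat{\bP}_{\rr}^{\T}$ follows from the trivial equalities $\bP_{\rr}\bF_J(t)=\bF_J(t)$ and $\widehat{\bP}_{\rr}\widehat{\bF}_J(t)=\widehat{\bF}_J(t)$, both immediate from the block structure of the defining matrices.

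The only real obstacle I anticipate is bookkeeping: keeping the order of factors correct so that the cyclic shifts of the trace end up with $\widetilde{\bcP}_{\p}^{\T}$ on the left and $\widetilde{\bcP}_{\p}$ on the right in \eqref{eq:PPeq3}, and being careful that $\widehat{\bF}_J(t)$ genuinely satisfies the same differential identity as $\bF_J(t)$ despite the presence of the nilpotent block $\widehat{\bE}_2$ in $\widehat{\bE}$---this works out because the lower-right block of $\widehat{\bF}_J(t)$ is identically zero, so $\widehat{\bE}_2$ never acts. Once these structural points are verified, all three identities and the Sylvester equation follow by essentially the same two-step template: differentiate under the integral (or apply the trace--integral swap) and evaluate a boundary term using $\bE\bF_J(0)=\bP_{\Ll}$.
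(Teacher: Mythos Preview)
Your proposal is correct and, for the three trace identities \eqref{eq:PPeq1}--\eqref{eq:PPeq3}, follows the paper's argument essentially verbatim: convert $\|\vecop{X}\|_2^2$ and $\langle\vecop{X},\vecop{Y}\rangle$ to traces, integrate in $t_1$ to recover $\bcP_{\p}$, $\widehat{\bcP}_{\p}$, or $\widetilde{\bcP}_{\p}$, apply cyclicity, and integrate in $t_2$.

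For the projected Sylvester equation \eqref{eq:PPSylv} the paper simply refers back to the argument of Theorem~\ref{theo:LyQ_pp}, which means passing to the Weierstra{\ss} blocks, verifying the Lyapunov-type identity for the $(1,1)$ block by differentiating $e^{\bJ t}$, and then reassembling. Your route is a compact coordinate-free version of the same idea: you encode the block computation in the single identity $\bE\dot{\bF}_J(t)=\bA\bF_J(t)$ (and its reduced analogue), which lets you recognize the integrand as a total derivative and read off the boundary term $\bE\bF_J(0)=\bP_{\Ll}$ directly. The two arguments are equivalent in content; yours avoids writing out the block matrices explicitly, while the paper's has the advantage of making the role of the Weierstra{\ss} decomposition visible and reusable for the later improper Sylvester equation in Lemma~\ref{lemma:traceLintnorm_ip}.
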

\begin{proof}
We make use of the property $\|\vect(X)\|_2^2 = \|X\|_{F}^2$ and the Kronecker product properties to obtain
\begin{align*}
\int_{0}^{\infty} \int_{0}^{\infty}\|\bh_{\p\p}(t_1, t_2)\|_2^2\dd t_1 \dd t_2 
&= \int_{0}^{\infty} \int_{0}^{\infty}\trace{\bB ^{\T}\bF_J(t_2)^{\T}\bM \bF_J(t_1)\bB \bB ^{\T}\bF_J(t_1)^{\T}\bM \bF_J(t_2)\bB }\dd t_1 \dd t_2 \\
&= \int_{0}^{\infty}\trace{\bB ^{\T}\bF_J(t_2)^{\T}\bM\bcP_{\p}\bM \bF_J(t_2)\bB }\dd t_2 \\
&= \int_{0}^{\infty}\trace{\bF_J(t_2)\bB \bB ^{\T}\bF_J(t_2)^{\T}\bM\bcP_{\p}\bM}\dd t_2 \\
&= \trace{\bcP_{\p}\bM\bcP_{\p}\bM},
\end{align*}
which proves \eqref{eq:PPeq1} while \eqref{eq:PPeq2} is proven analogously.
To show the last equation given in \eqref{eq:PPeq3} we make use of the property $\langle\vect(X),\vect(Y)\rangle = \trace{X^{\T}Y}$ and obtain 
\begin{align*}
\int_{0}^{\infty} \int_{0}^{\infty}\langle \bh_{\p\p}(t_1, t_2),\widehat{\bh}_{\p\p}&(t_1, t_2)\rangle\dd t_1 \dd t_2\\
&=\int_{0}^{\infty} \int_{0}^{\infty}
\trace{\bB^{\T}\bF_J(t_2)^{\T}\bM \bF_J(t_1)\bB \widehat{\bB }^{\T}\widehat{\bF}_J(t_1)^{\T}\widehat{\bM}\widehat{\bF}_J(t_2)\widehat{\bB }}\dd t_1 \dd t_2\\
&=\int_{0}^{\infty} \int_{0}^{\infty}
\trace{\widehat{\bF}_J(t_2)\widehat{\bB }\bB^{\T}\bF_J(t_2)^{\T}\bM \bF_J(t_1)\bB \widehat{\bB }^{\T}\widehat{\bF}_J(t_1)^{\T}\widehat{\bM}}\dd t_1 \dd t_2\\
&=\trace{\widetilde{\bcP}_{\p}^{\T}\bM\widetilde{\bcP}_{\p}\widehat{\bM}}.
\end{align*}
To show that $\widetilde{\bcP}_{\p}$ solves the Sylvester equation in \eqref{eq:PPSylv}, we follow the same argumentation as for \Cref{theo:LyQ_pp}.
\end{proof}

\Cref{lemma:Lintnorm_pp} and \ref{lemma:traceLintnorm_pp} result in the following theorem.
\begin{theorem}
We consider the asymptotically stable DAE system with a quadratic output equation from \eqref{eq:DAE_q}, the reduced-order model in \eqref{eq:redSyst}, the corresponding proper controllability Gramian $\bcP_{\p}$ as defined in \eqref{eq:Gramian_contr}, the reduced proper controllability Gramian $\widehat{\bcP}_{\p}$, and $\widetilde{\bcP}_{\p}$ as defined in \Cref{lemma:traceLintnorm_pp}.
The error between the proper-proper output $\by_{\p\p}(t)$ of the full-order model \eqref{eq:DAE_q} and the reduced output $\widehat{\by}_{\p\p}(t)$ satisfies the following bound:
\[
\|\by_{\p\p}-\widehat{\by}_{\p\p}\|_{L_\infty}^2 \leq \left(\trace{\bcP_{\p}\bM\bcP_{\p}\bM}-2\trace{\widetilde{\bcP}_{\p}^{\T}\bM\widetilde{\bcP}_{\p}\widehat{\bM}}+\trace{\widehat{\bcP}_{\p}\widehat{\bM}\widehat{\bcP}_{\p}\widehat{\bM}}\right)\|\bu\otimes \bu\|_{L_2}.
\]
\end{theorem}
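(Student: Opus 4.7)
The plan is to reduce the statement to the identities already established and then to apply the linearity of the trace. By Lemma~\ref{lemma:Lintnorm_pp} we already have
\[
\|\by_{\p\p}-\widehat{\by}_{\p\p}\|_{L_{\infty}}^2 \leq \left(\int_{0}^{\infty} \int_{0}^{\infty}\|\bh_{\p\p}(t_1, t_2)-\widehat{\bh}_{\p\p}(t_1, t_2)\|_2^2 \dd t_1 \dd t_2\right) \|\bu\otimes \bu\|_{L_2}^2,
\]
so the remaining task is to rewrite the double integral of the squared Euclidean norm of the difference in terms of the traces appearing in the claim.

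First, I would use the polarization-style expansion
\[
\|\bh_{\p\p}(t_1, t_2)-\widehat{\bh}_{\p\p}(t_1, t_2)\|_2^2 = \|\bh_{\p\p}(t_1, t_2)\|_2^2 - 2\langle \bh_{\p\p}(t_1, t_2), \widehat{\bh}_{\p\p}(t_1, t_2)\rangle + \|\widehat{\bh}_{\p\p}(t_1, t_2)\|_2^2,
\]
which is valid pointwise in $(t_1,t_2)$ since $\bh_{\p\p}$ and $\widehat{\bh}_{\p\p}$ take values in the same Euclidean space. Integrating this identity over $(0,\infty)\times(0,\infty)$ and invoking Fubini (justified by the asymptotic stability of $s\bE-\bA$ and $s\widehat{\bE}-\widehat{\bA}$, which ensures integrability of each term) splits the integral into three pieces.

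Next, I would identify the three pieces with \eqref{eq:PPeq1}, \eqref{eq:PPeq3}, and \eqref{eq:PPeq2} of Lemma~\ref{lemma:traceLintnorm_pp} respectively, obtaining
\[
\int_{0}^{\infty}\int_{0}^{\infty}\|\bh_{\p\p}-\widehat{\bh}_{\p\p}\|_2^2\dd t_1 \dd t_2 = \trace{\bcP_{\p}\bM\bcP_{\p}\bM} - 2\trace{\widetilde{\bcP}_{\p}^{\T}\bM\widetilde{\bcP}_{\p}\widehat{\bM}} + \trace{\widehat{\bcP}_{\p}\widehat{\bM}\widehat{\bcP}_{\p}\widehat{\bM}}.
\]
Substituting this identity into the bound from Lemma~\ref{lemma:Lintnorm_pp} yields the claimed inequality.

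Conceptually this is pure bookkeeping since both lemmas do the substantial work: Lemma~\ref{lemma:Lintnorm_pp} reduces the $L_\infty$ output error to an $L_2$ kernel error via Cauchy--Schwarz, and Lemma~\ref{lemma:traceLintnorm_pp} converts the relevant $L_2$ inner products of kernels into trace expressions via the Gramians $\bcP_{\p}$, $\widehat{\bcP}_{\p}$ and the cross-Gramian $\widetilde{\bcP}_{\p}$. The only mild obstacle worth mentioning is ensuring that $\widetilde{\bcP}_{\p}$ is well defined and that the Sylvester equation \eqref{eq:PPSylv} it satisfies produces the correct sign and projection structure, but this is already supplied within Lemma~\ref{lemma:traceLintnorm_pp}, so nothing new needs to be established here.
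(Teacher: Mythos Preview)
Your proposal is correct and matches the paper's approach exactly: the paper simply states that the theorem follows from Lemma~\ref{lemma:Lintnorm_pp} and Lemma~\ref{lemma:traceLintnorm_pp}, and you have supplied precisely the expansion and substitution that makes this explicit. One incidental observation: your squared version of Lemma~\ref{lemma:Lintnorm_pp} correctly produces $\|\bu\otimes\bu\|_{L_2}^2$ on the right-hand side, whereas the theorem as stated in the paper carries only $\|\bu\otimes\bu\|_{L_2}$; this appears to be a typographical slip in the paper rather than a flaw in your argument.
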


\subsection{The improper-proper output error}
We want to estimate the improper-proper output error, i.e., the error between the improper-proper output $\by_{\ii\p}(t)$ and the reduced improper-proper output $\widehat{\by}_{\ii\p}(t)$. We define
\begin{equation}\label{eq:Hip}
\bh_{\ii\p}(t, k):= \bB ^{\T} \bF_{N}(k)^{\T}\bM \bF_{J}(t)\bB  \quad \text{ and }\quad
\widehat{\bh}_{\ii\p}(t, k):= \widehat{\bB }^{\T} \widehat{\bF}_{N}(k)^{\T}\widehat{\bM}\widehat{\bF}_{J}(t)\widehat{\bB }
\end{equation}
to obtain the output representations
\begin{align*}
\by_{\ii\p}(t) &=  \bx_{\ii}(t)^{\T}\bM \bx_{\p}(t)
=\int_{0}^{t}\sum_{k=0}^{\nu-1}\bh_{\ii\p}(t-\tau, k)
(\bu(\tau)\otimes \bu^{(k)}(t))\dd\tau,\\
\widehat{\by}_{\ii\p}(t) &=  \widehat{\bx}_{\ii}(t)^{\T}\widehat{\bM} \widehat{\bx}_{\p}(t) 
=\int_{0}^{t}\sum_{k=0}^{\nu-1}\widehat{\bh}_{\ii\p}(t-\tau, k)(\bu(\tau)\otimes \bu^{(k)}(t))\dd\tau.
\end{align*}
This representation of the improper-proper output can be used to obtain a bound of the $L_{\infty}$-error.

\begin{lemma}\label{lemma:Lintnorm_ip}
We consider the DAE system with a quadratic output equation from \eqref{eq:DAE_q}, the reduced-order model in \eqref{eq:redSyst}, and $\bh_{\ii\p}(t, k)$, $\widehat{\bh}_{\ii\p}(t, k)$ as defined in \eqref{eq:Hip}.
Then, the following bound holds
\begin{align*}
\|\by_{\ii\p}-\widehat{\by}_{\ii\p}\|_{L_{\infty}}\leq \left(\int_{0}^{\infty}\sum_{k=0}^{\nu-1} \| \bh_{\ii\p}(t, k) - \widehat{\bh}_{\ii\p}(t, k)\|_2^2 \dd \tau\right)^{\frac{1}{2}}\left(\int_{0}^{\infty}\sum_{k=0}^{\nu-1}\| \bu(\tau)\otimes \bu^{(k)}(t)\|^2_2 \dd \tau\right)^{\frac{1}{2}}.
\end{align*}
\end{lemma}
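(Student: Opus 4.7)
The plan is to mirror the proof of \Cref{lemma:Lintnorm_pp} almost verbatim, with the single change that the outer integration over $t_1 \in [0,t]$ is replaced by a finite sum over $k \in \{0,\dots,\nu-1\}$. The key structural fact we exploit is that $\bh_{\ii\p}(t-\tau,k) - \widehat{\bh}_{\ii\p}(t-\tau,k)$ plays exactly the same role as $\bh_{\p\p}(t-t_1,t-t_2) - \widehat{\bh}_{\p\p}(t-t_1,t-t_2)$ did before, paired now against $\bu(\tau)\otimes\bu^{(k)}(t)$ (interpreting $\bh_{\ii\p}$ in its vectorized form so that the pairing produces a scalar, consistently with how $\bh_{\p\p}$ is used).

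First, for any fixed $t \geq 0$, I would write
\[
\big|\by_{\ii\p}(t) - \widehat{\by}_{\ii\p}(t)\big|
= \Bigg|\int_{0}^{t}\sum_{k=0}^{\nu-1}\bigl(\bh_{\ii\p}(t-\tau,k) - \widehat{\bh}_{\ii\p}(t-\tau,k)\bigr)\bigl(\bu(\tau)\otimes\bu^{(k)}(t)\bigr)\,\dd\tau\Bigg|,
\]
by linearity of the convolution representations of $\by_{\ii\p}$ and $\widehat{\by}_{\ii\p}$ given immediately before the lemma statement.

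Next, I would pull the absolute value inside and apply the Cauchy--Schwarz inequality to the joint sum--integral over $(\tau,k) \in [0,t]\times\{0,\dots,\nu-1\}$ equipped with the product measure $\dd\tau \otimes \text{counting}$. This yields
\[
\big|\by_{\ii\p}(t) - \widehat{\by}_{\ii\p}(t)\big|
\leq \left(\int_0^t\sum_{k=0}^{\nu-1}\|\bh_{\ii\p}(t-\tau,k) - \widehat{\bh}_{\ii\p}(t-\tau,k)\|_2^2\,\dd\tau\right)^{\!\tfrac{1}{2}}\!\left(\int_0^t\sum_{k=0}^{\nu-1}\|\bu(\tau)\otimes\bu^{(k)}(t)\|_2^2\,\dd\tau\right)^{\!\tfrac{1}{2}}.
\]
After a change of variable $t-\tau \mapsto \tau$ in the first factor and extending both integrals from $[0,t]$ to $[0,\infty)$ (which only enlarges the right-hand side), I take the supremum over $t\geq 0$ on the left to obtain the claimed bound on $\|\by_{\ii\p} - \widehat{\by}_{\ii\p}\|_{L_\infty}$.

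The only mildly nontrivial step is justifying the first Cauchy--Schwarz application, since the integrand is the scalar product of a matrix-valued kernel with a Kronecker product of input vectors; I would handle this by reading $\bh_{\ii\p}$ as its vectorization (as is done implicitly in the proper-proper case via $\vecop{\cdot}$), so that the pairing $\bh_{\ii\p}(\cdot,\cdot)(\bu(\tau)\otimes\bu^{(k)}(t))$ is an ordinary Euclidean inner product to which Cauchy--Schwarz applies directly. No new Gramian identities are needed; everything follows formally from the convolution structure of the outputs and one application of Cauchy--Schwarz on a product measure space, so I do not expect any real obstacle here.
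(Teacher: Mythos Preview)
Your proposal is correct and follows essentially the same approach as the paper. The only cosmetic difference is that the paper applies Cauchy--Schwarz iteratively (first to the sum over $k$, then to the integral over $\tau$), whereas you apply it once to the joint sum--integral viewed as an inner product on the product measure space; both routes yield the identical bound, and your explicit remark about reading $\bh_{\ii\p}$ in vectorized form clarifies a point the paper leaves implicit.
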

\begin{proof}
Using the definition \eqref{eq:Hip}, we obtain 
\begin{align*}
\big|\by_{\ii\p}(t)-\widehat{\by}_{\ii\p}(t)\big| = \bigg|\int_{0}^{t}\sum_{k=0}^{\nu-1} \left(\bh_{\ii\p}(t-\tau, k) - \widehat{\bh}_{\ii\p}(t-\tau, k)\right)\left(\bu(\tau)\otimes \bu^{(k)}(t)\right)\dd \tau \bigg|.
\end{align*}
By applying the Cauchy-Schwarz inequality multiple times, we obtain the following estimations
\begin{align*}
\big|\by_{\ii\p}(t)-\widehat{\by}_{\ii\p}(t)\big| &\leq \int_{0}^{t}\bigg|\sum_{k=0}^{\nu-1} \left(\bh_{\ii\p}(t-\tau, k) - \widehat{\bh}_{\ii\p}(t-\tau, k)\right)\left(\bu(\tau)\otimes \bu^{(k)}(t)\right) \bigg|\dd \tau\\
&\leq \int_{0}^{t}\left(\sum_{k=0}^{\nu-1} \| \bh_{\ii\p}(t-\tau, k) - \widehat{\bh}_{\ii\p}(t-\tau, k)\|_2^2 \right)^{\frac{1}{2}}\left(\sum_{k=0}^{\nu-1}\| \bu(\tau)\otimes \bu^{(k)}(t)\|^2_2 \right)^{\frac{1}{2}}\dd \tau\\
&\leq \left(\int_{0}^{t}\sum_{k=0}^{\nu-1} \| \bh_{\ii\p}(t, k) - \widehat{\bh}_{\ii\p}(t, k)\|_2^2 \dd \tau\right)^{\frac{1}{2}}\left(\int_{0}^{t}\sum_{k=0}^{\nu-1}\| \bu(\tau)\otimes \bu^{(k)}(t)\|^2_2 \dd \tau\right)^{\frac{1}{2}}.
\end{align*}
such that the $L_{\infty}$-norm of the output error is bounded by 
\begin{align*}
\|\by_{\ii\p}-\widehat{\by}_{\ii\p}\|_{L_{\infty}}\leq \left(\int_{0}^{\infty}\sum_{k=0}^{\nu-1} \| \bh_{\ii\p}(t, k) - \widehat{\bh}_{\ii\p}(t, k)\|_2^2 \dd \tau\right)^{\frac{1}{2}}\left(\int_{0}^{\infty}\sum_{k=0}^{\nu-1}\| \bu(\tau)\otimes \bu^{(k)}(t)\|^2_2 \dd \tau\right)^{\frac{1}{2}}.
\end{align*}
\end{proof}

\begin{lemma}\label{lemma:traceLintnorm_ip}
We consider the asymptotically stable DAE system with a quadratic output equation from \eqref{eq:DAE_q}, the reduced-order model in \eqref{eq:redSyst}, the corresponding proper and improper controllability Gramian $\bcP_{\p}$ and $\bcP_{\ii}$ as defined in \eqref{eq:Gramian_contr}, and the reduced proper and improper controllability Gramians
\[
\widehat{\bcP}_{\p}:= \int_0^{\infty}\begin{bmatrix}
e^{\widehat{\bA}_1t}\widehat{\bB}_1\widehat{\bB}_1^{\T}e^{\widehat{\bA}_1^{\T}t} & 0 \\ 0 & 0
\end{bmatrix}\dd t, \qquad \widehat{\bcP}_{\ii}:= \sum_{k=0}^{\nu-1}\begin{bmatrix}
0 & 0 \\ 0 & \widetilde{\bE}_2^k\widetilde{\bB}_2\widetilde{\bB}_2^{\T}\left(\widetilde{\bE}_2^k\right)^{\T}
\end{bmatrix}.
\] The functionals $\bh_{\ii\p}(t, k)$ and $\widehat{\bh}_{\ii\p}(t, k)$ are as defined in \eqref{eq:Hip}.
Then the following equalities hold
\begin{align}
\int_{0}^{\infty}\sum_{k=0}^{\nu-1}\|\bh_{\ii\p}(t,k)\|_2^2\dd t &= \trace{\bcP_{\ii}\bM\bcP_{\p}\bM},\label{eq:PIeq1}\\
\int_{0}^{\infty} \sum_{k=0}^{\nu-1}\|\widehat{\bh}_{\ii\p}(t, k)\|_2^2\dd t_1 \dd t_2 
&= \trace{\widehat{\bcP}_{\ii}\widehat{\bM}\widehat{\bcP}_{\p}\widehat{\bM}},\label{eq:PIeq2}\\
\int_{0}^{\infty}\sum_{k=0}^{\nu-1}\langle \bh_{\ii\p}(t, k),\widehat{\bh}_{\ii\p}(t, k)\rangle\dd t &= \trace{\widetilde{\bcP}_{\ii}^{\T}\bM\widetilde{\bcP}_{\p}\widehat{\bM}}\label{eq:PIeq3}
\end{align}
where $\widetilde{\bcP}_{\p}$ is as in \Cref{lemma:traceLintnorm_pp} and $\widetilde{\bcP}_{\ii}:= \sum_{k=0}^{\nu-1}\bF_N(k)\bB \widehat{\bB }^{\T}\widehat{\bF}_N(k)^{\T}$ satisfies the projected Sylvester equation 
\begin{align}\label{eq:PISylv}
\bA\widetilde{\bcP}_{\ii} \widehat{\bA}^{\T} - \bE\widetilde{\bcP}_{\ii}\widehat{\bE}^{\T} = (\bI- \bP_{\Ll} )\bB \widehat{\bB }^{\T}(\bI-\widehat{\bP}_{\Ll}^{\T}), \qquad 0 = \bP_{\rr} \widetilde{\bcP}_{\ii} \widehat{\bP}_{\rr}^{\T}
\end{align}
with $\widehat{\bP}_{\Ll} = \widehat{\bP}_{\rr} = \begin{bmatrix}
	\bI_{\rr} & 0 \\ 0 & 0
\end{bmatrix}$, $\bP_{\Ll}$ and $\bP_{\rr}$ as defined in \eqref{eq:Proj}, and $\langle\cdot,\cdot\rangle$ denotes the Euclidean inner product with $\langle v_1,v_2\rangle = v_1^{\T}v_2$ for $v_1,v_2\in \R^{n}$.
\end{lemma}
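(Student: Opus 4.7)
The three trace identities \eqref{eq:PIeq1}, \eqref{eq:PIeq2} and \eqref{eq:PIeq3} are proved by exactly the same mechanism as in \Cref{lemma:traceLintnorm_pp}, so the plan is to mimic that derivation in the mixed proper/improper setting. For \eqref{eq:PIeq1} I would apply $\|\vect(X)\|_2^2 = \tr(X^{\T}X)$ to the definition of $\bh_{\ii\p}$ to get
\[
\int_{0}^{\infty}\!\sum_{k=0}^{\nu-1}\tr\!\bigl(\bB^{\T}\bF_J(t)^{\T}\bM\bF_N(k)\bB\bB^{\T}\bF_N(k)^{\T}\bM\bF_J(t)\bB\bigr)\dd t,
\]
then collapse the sum over $k$ using $\sum_{k}\bF_N(k)\bB\bB^{\T}\bF_N(k)^{\T}=\bcP_{\ii}$ and the $t$-integral using $\int\bF_J(t)\bB\bB^{\T}\bF_J(t)^{\T}\dd t=\bcP_{\p}$, finishing by cycling the trace. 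Equation \eqref{eq:PIeq2} is identical with hats placed on everything. For \eqref{eq:PIeq3} I would use $\langle\vect(X),\vect(Y)\rangle=\tr(X^{\T}Y)$, cycle the resulting trace until the terms that depend on $k$ are adjacent and those that depend on $t$ are adjacent, and then read off the sums $\sum_{k}\bF_N(k)\bB\widehat{\bB}^{\T}\widehat{\bF}_N(k)^{\T}=\widetilde{\bcP}_{\ii}$ and $\int_{0}^{\infty}\widehat{\bF}_J(t)\widehat{\bB}\bB^{\T}\bF_J(t)^{\T}\dd t=\widetilde{\bcP}_{\p}^{\T}$. A final cyclic shift (together with symmetry of $\bM$ and $\widehat{\bM}$) converts $\tr(\bM\widetilde{\bcP}_{\ii}\widehat{\bM}\widetilde{\bcP}_{\p}^{\T})$ into the stated form $\tr(\widetilde{\bcP}_{\ii}^{\T}\bM\widetilde{\bcP}_{\p}\widehat{\bM})$.

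For the projected Sylvester equation \eqref{eq:PISylv}, I would follow the template of the proof of \Cref{theo:LyQ_pi}. First, I would move to the Weierstra{\ss} coordinates: using $\bF_N(k)\bB = \bT^{-1}\bigl[0\,;\,-\bN^{k}\bB_{2}\bigr]$ and the analogous block structure on the reduced side (where $\widehat{\bE}$, $\widehat{\bA}$, $\widehat{\bB}$ are already in the decoupled form displayed in \eqref{eq:redSyst}, so $\widehat{\bF}_N(k)\widehat{\bB}=\bigl[0\,;\,-\widetilde{\bE}_{2}^{k}\widetilde{\bB}_{2}\bigr]$), the matrix $\widetilde{\bcP}_{\ii}$ has only a non-zero $(2,2)$ block
\[
\widetilde{\bP}_{12} \;:=\; \sum_{k=0}^{\nu-1}\bN^{k}\bB_{2}\widetilde{\bB}_{2}^{\T}\bigl(\widetilde{\bE}_{2}^{k}\bigr)^{\T}.
\]
A telescoping argument (exactly as in the proof of \Cref{theo:LyQ_pi}, exploiting $\bN^{\nu}=\widetilde{\bE}_{2}^{\nu}=0$) shows that $\widetilde{\bP}_{12}$ satisfies the discrete-time Sylvester equation $\widetilde{\bP}_{12}-\bN\widetilde{\bP}_{12}\widetilde{\bE}_{2}^{\T} = \bB_{2}\widetilde{\bB}_{2}^{\T}$. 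Multiplying the block Weierstra{\ss} forms of $\bA,\bE$ and of $\widehat{\bA},\widehat{\bE}$ against the block form of $\widetilde{\bcP}_{\ii}$ then assembles this scalar identity into the full Sylvester equation \eqref{eq:PISylv}, where the right-hand side $(\bI-\bP_{\Ll})\bB\widehat{\bB}^{\T}(\bI-\widehat{\bP}_{\Ll}^{\T})$ emerges precisely because the identity survives only on the infinite-eigenvalue block. The projection condition $\bP_{\rr}\widetilde{\bcP}_{\ii}\widehat{\bP}_{\rr}^{\T}=0$ is immediate from the block structure, since $\bP_{\rr}$ and $\widehat{\bP}_{\rr}$ project onto the complementary (finite) blocks.

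The main obstacle I anticipate is notational: keeping correct track of the transposes, the ordering of factors, and the asymmetric ``$\bA\cdot\widehat{\bA}^{\T}$ vs.\ $\bE\cdot\widehat{\bE}^{\T}$'' pattern that distinguishes a Sylvester equation from a Lyapunov equation. In particular, verifying \eqref{eq:PIeq3} requires the trace cycling to be executed in precisely the right order for the two time-like parameters ($t$ and $k$) to decouple, and the identification of the $\int\widehat{\bF}_J\widehat{\bB}\bB^{\T}\bF_J^{\T}\dd t$ factor as $\widetilde{\bcP}_{\p}^{\T}$ (rather than $\widetilde{\bcP}_{\p}$) is the easiest place for a sign/transpose slip. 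Beyond this bookkeeping, each step is routine and reuses the machinery already developed in \Cref{theo:LyQ_pi} and \Cref{lemma:traceLintnorm_pp}.
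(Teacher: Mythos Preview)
Your proposal is correct and matches the paper's own approach: the paper simply states that the proof is analogous to \Cref{lemma:traceLintnorm_pp}, and you have spelled out exactly that analogy, including the telescoping argument in Weierstra{\ss} coordinates for the Sylvester equation in the spirit of \Cref{theo:LyQ_pi}.
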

\begin{proof}
The proof is analogous to the one from \Cref{lemma:traceLintnorm_pp}.
\end{proof}

\begin{theorem}\label{theo:ip_out}
For $\bu\in\C^{\nu-1}([0,\infty), \mathbb{R}^m)$ it holds
\begin{align*}
\int_{0}^{t}\sum_{k=0}^{\nu-1}\| \bu(\tau)\otimes \bu^{(k)}(t)\|^2_2 \dd \tau
\leq \nu \|\bu\|_{\bcC^{\nu-1}}^2\|\bu\|_{L_2}^2.
\end{align*}
\end{theorem}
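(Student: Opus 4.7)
The plan is to combine three elementary facts: the multiplicativity of the Kronecker product norm, the dominance of each pointwise derivative by the $\bcC^{\nu-1}$ norm, and the extension of the integral up to infinity to obtain the $L_2$ norm of $\bu$.

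First I would use the identity $\|a\otimes b\|_2 = \|a\|_2 \|b\|_2$ for vectors $a,b$, which is a standard property of the Kronecker product. Applied to our integrand, this gives
\[
\|\bu(\tau)\otimes \bu^{(k)}(t)\|_2^2 = \|\bu(\tau)\|_2^2\, \|\bu^{(k)}(t)\|_2^2.
\]
The factor $\|\bu(\tau)\|_2^2$ does not depend on $k$, so it can be pulled outside of the summation over $k$. This separates the bound into a spatial (in $\tau$) and a derivative (in $k$) part.

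Next I would bound the derivative part by noting that $\|\bu^{(k)}(t)\|_2 \leq \|\bu\|_{\bcC^{\nu-1}}$ for each $k=0,\dots,\nu-1$ (this is how the $\bcC^{\nu-1}$ norm is defined: the maximum over $k$ of the supremum over $t$ of $\|\bu^{(k)}(t)\|_2$). Summing over $k$ gives a factor of $\nu$:
\[
\sum_{k=0}^{\nu-1}\|\bu^{(k)}(t)\|_2^2 \leq \nu\, \|\bu\|_{\bcC^{\nu-1}}^2.
\]
Finally, the integral $\int_0^t \|\bu(\tau)\|_2^2 \dd\tau$ is bounded above by $\int_0^\infty \|\bu(\tau)\|_2^2 \dd\tau = \|\bu\|_{L_2}^2$. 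Multiplying everything together yields the claimed inequality.

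There is no real obstacle here; the proof is essentially an application of the multiplicative Kronecker norm identity together with termwise $L^\infty$ bounds on the derivatives. The only subtlety worth flagging is the implicit assumption that $\bu$ has finite $\bcC^{\nu-1}$ and $L_2$ norms (so that the right-hand side is finite and the bound is meaningful), which is natural given the smoothness requirement $\bu\in \bcC^{\nu-1}([0,\infty),\mathbb{R}^m)$.
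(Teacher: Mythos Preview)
Your proof is correct and follows essentially the same line as the paper's: both factor the integrand, bound each $\|\bu^{(k)}(t)\|_2$ by the $\bcC^{\nu-1}$ norm, sum to get the factor $\nu$, and extend the $\tau$-integral to $[0,\infty)$ for the $L_2$ norm. Your direct use of the multiplicative identity $\|a\otimes b\|_2 = \|a\|_2\|b\|_2$ is in fact slightly cleaner than the paper's explicit Kronecker expansion followed by Cauchy--Schwarz.
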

\begin{proof}
Applying Kronecker product properties and Cauchy-Schwarz inequality yields 
\begin{align*}
\int_{0}^{t}\sum_{k=0}^{\nu-1}\| \bu(\tau)\otimes \bu^{(k)}(t)\|^2_2 \dd \tau
&= \int_{0}^{t}\sum_{k=0}^{\nu-1}(\bu^{(k)}(t)\otimes \bu(\tau)) ^{\T}(\bu(\tau)\otimes \bu^{(k)}(t)) \dd \tau\\
&= \int_{0}^{t}\sum_{k=0}^{\nu-1}(\bu^{(k)}(t)^{\T}\otimes \bu(\tau)^{\T}) (\bu(\tau)\otimes \bu^{(k)}(t)) \dd \tau\\
&= \int_{0}^{t}\sum_{k=0}^{\nu-1} ( \bu^{(k)}(t)^{\T} \bu(\tau)) \otimes  ( \bu(\tau)^{\T}\bu^{(k)}(t) ) \dd \tau\\
&= \int_{0}^{t}\sum_{k=0}^{\nu-1} \bu^{(k)}(t)^{\T} \bu(\tau)\bu(\tau)^{\T}\bu^{(k)}(t) \dd \tau\\
&\leq \sum_{k=0}^{\nu-1} \int_{0}^{\infty} \| \bu(\tau)\|^2 \dd \tau \|\bu^{(k)}(t)\|^2\\
&= \sum_{k=0}^{\nu-1} \| \bu(\tau)\|_{L_2}^2 \|\bu^{(k)}(t)\|^2
\leq \nu \|\bu\|_{\bcC^{\nu-1}}^2\|\bu\|_{L_2}^2
\end{align*}
for $\|\bu\|_{\bcC^{\nu-1}}:=\max_{k = 0, \dots, \nu-1}\sup_{t\geq 0} \|\bu\|_2$.
\end{proof}

Together with \Cref{theo:ip_out},  \Cref{lemma:Lintnorm_ip} and \Cref{lemma:traceLintnorm_ip} we obtain the following theorem.
\begin{theorem}
We consider the asymptotically stable DAE system with a quadratic output equation \eqref{eq:DAE_q}, the reduced-order model in \eqref{eq:redSyst}, the corresponding proper and improper controllability Gramians $\bcP_{\p}$, $\bcP_{\ii}$ as defined in \eqref{eq:Gramian_contr}, the reduced proper and improper controllability Gramians $\widehat{\bcP}_{\p}$, $\widehat{\bcP}_{\ii}$, and $\widetilde{\bcP}_{\p}$, $\widetilde{\bcP}_{\ii}$ as defined in Lemma \ref{lemma:traceLintnorm_ip}.
The error between the improper-proper output $\by_{\ii\p}$ of the full-order model \eqref{eq:DAE_q} and the reduced output $\widehat{\by}_{\ii\p}$ satisfies the following bound:
\[\|\by_{\ii\p}-\widehat{\by}_{\ii\p}\|_{L_\infty}\leq \Biggl(
\trace{\bcP_{\p}\bM\bcP_{\ii}\bM} 
-2\trace{\widetilde{\bcP}_{\p}^{\T}\bM\widetilde{\bcP}_{\ii}\widehat{\bM}}+\trace{\widehat{\bcP}_{\p}\widehat{\bM}\widehat{\bcP}_{\ii}\widehat{\bM}}
\Biggr)^{\frac{1}{2}} \nu ^{\frac{1}{2}} \|\bu\|_{\bcC^{\nu-1}}\|\bu\|_{L_2}\]
for output functions $\bu\in\C^{\nu-1}([0,\infty), \mathbb{R}^m)$.
\end{theorem}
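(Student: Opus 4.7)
The plan is to assemble the bound by combining the three preceding results in a natural order. Starting from \Cref{lemma:Lintnorm_ip}, the key observation is that the integrand on the left factor of the Cauchy--Schwarz bound can be expanded as
\[
\|\bh_{\ii\p}(t,k) - \widehat{\bh}_{\ii\p}(t,k)\|_2^2
= \|\bh_{\ii\p}(t,k)\|_2^2 - 2\langle \bh_{\ii\p}(t,k),\widehat{\bh}_{\ii\p}(t,k)\rangle + \|\widehat{\bh}_{\ii\p}(t,k)\|_2^2.
\]
Integrating in $t$ and summing over $k$ turns each of these three pieces into one of the trace quantities identified in \Cref{lemma:traceLintnorm_ip}, namely $\trace{\bcP_{\ii}\bM\bcP_{\p}\bM}$, $\trace{\widetilde{\bcP}_{\ii}^{\T}\bM\widetilde{\bcP}_{\p}\widehat{\bM}}$, and $\trace{\widehat{\bcP}_{\ii}\widehat{\bM}\widehat{\bcP}_{\p}\widehat{\bM}}$, with the cyclic property of the trace already used there to rearrange the orders (so that the result matches the terms appearing in the statement).

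Next, I would substitute the resulting expression into the bound of \Cref{lemma:Lintnorm_ip}. The left factor is then exactly the first bracketed quantity in the statement, raised to the power $1/2$. For the right factor, which is
\[
\left(\int_{0}^{\infty}\sum_{k=0}^{\nu-1}\|\bu(\tau)\otimes \bu^{(k)}(t)\|^2_2 \dd \tau\right)^{\!1/2},
\]
I invoke \Cref{theo:ip_out} directly, yielding the upper bound $\nu^{1/2}\|\bu\|_{\bcC^{\nu-1}}\|\bu\|_{L_2}$. Combining both factors gives the claimed inequality.

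There is no genuine obstacle here since all the ingredients are already proven; the work is essentially bookkeeping. The only subtle point to check is that the expansion of $\|\bh_{\ii\p}-\widehat{\bh}_{\ii\p}\|_2^2$ produces precisely the three traces stated in \Cref{lemma:traceLintnorm_ip} with the correct signs, and that the cross term is real (which is automatic, as the inner product defined there is the Euclidean one). After that, one needs only to verify that the right-hand side of \Cref{theo:ip_out} applies under the regularity assumption $\bu\in\C^{\nu-1}([0,\infty),\mathbb{R}^m)$, which is exactly the hypothesis stated in the theorem.
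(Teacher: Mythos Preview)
Your proposal is correct and follows exactly the approach the paper takes: the paper simply states that the theorem follows from combining \Cref{lemma:Lintnorm_ip}, \Cref{lemma:traceLintnorm_ip}, and \Cref{theo:ip_out}, and you have spelled out precisely how these pieces fit together (expand the squared norm, identify the three traces, and bound the input factor). The only cosmetic point is the reordering of factors inside the traces between \Cref{lemma:traceLintnorm_ip} and the theorem statement, which you correctly handle via cyclicity and symmetry of $\bM$, $\widehat{\bM}$.
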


Since $\|\by_{\p\ii}-\widehat{\by}_{\p\ii}\|_{L_\infty}^2$ is equal to $\|\by_{\ii\p}-\widehat{\by}_{\ii\p}\|_{L_\infty}^2$, and the improper states are not truncated
the overall error $\|\by-\widehat{\by}\|_{L_{\infty}}$ can be estimated as 
\begin{align}\label{eq:finalErrBound}
\|\by-\widehat{\by}\|_{L_{\infty}} &\leq \|\by_{\p\p}-\widehat{\by}_{\p\p}\|_{L_{\infty}} + 2\cdot\|\by_{\p\ii}-\widehat{\by}_{\p\ii}\|_{L_{\infty}}\nonumber\\
&\leq \Biggl(\trace{\bcP_{\p}\bM\bcP_{\p}\bM}-2\trace{\widetilde{\bcP}_{\p}^{\T}\bM\widetilde{\bcP}_{\p}\widehat{\bM}}+\trace{\widehat{\bcP}_{\p}\widehat{\bM}\widehat{\bcP}_{\p}\widehat{\bM}}\Biggr)^{\frac{1}{2}}\|\bu\otimes \bu\|_{L_2}^{\frac{1}{2}}\\
&\qquad + 
2\cdot\Biggl(
\trace{\bcP_{\p}\bM\bcP_{\ii}\bM} 
-2\trace{\widetilde{\bcP}_{\p}^{\T}\bM\widetilde{\bcP}_{\ii}\widehat{\bM}}+\trace{\widehat{\bcP}_{\p}\widehat{\bM}\widehat{\bcP}_{\ii}\widehat{\bM}}
\Biggr)^{\frac{1}{2}} \nu ^{\frac{1}{2}} \|\bu\|_{\bcC^{\nu-1}}\|\bu\|_{L_2}.\nonumber
\end{align}

\section{Extension to the multiple output case}\label{sec:mulOutGram} 
Up to now, we considered systems \eqref{eq:DAE_q} with a single output.
In this section, however, we will extend the previous theory to the multiple output case, where the output is given as 
\begin{align}
\by(t) = \bC\bx(t) + \begin{bmatrix}
\bx(t)^{\T}\bM_1\bx(t)\\
\vdots \\
\bx(t)^{\T}\bM_p\bx(t)
\end{bmatrix}
\end{align}
where $\bC\in\mathbb{R}^{p\times n}$ and $\bM_k = \bM_k^{\T}\in\Rnn$ for all $k=1, \dots, p$.
As we already did for the DAE system with one quadratic output \eqref{eq:DAE_q} we consider the different parts of the output separately so that we investigate 
\[
\by_{C}(t):= \bC\bx(t),\qquad \by_{j}(t):= \bx(t)^{\T}\bM_j\bx(t),\quad j=1,\dots, p
\]
and derive the corresponding Gramians that are summed up in the end to derive Gramians that cover the overall observability behavior.

For the linear term $\by_{C}(t)$, define the proper and improper observability mapping 
\[
\bcC^C_{\p}(t):=\bC\bF_J(t), \qquad \bcC^C_{\ii}(k):=\bC\bF_N(k)
\]
and apply the theory from \cite{morSty04} to obtain the proper and improper observability Gramian 
\begin{align*}
\bcQ_{\p}^{C}&:=\int_{0}^{\infty} \left(\bcC^C_{\p}(t)\right)^{\T}\bcC^C_{\p}(t)\dd t:=\int_{0}^{\infty} \bF_J(t)^{\T}\bC^{\T}\bC \bF_J(t)\dd t,\\ \bcQ_{\ii}^{C}&:=\sum_{k=0}^{\nu-1} \left(\bcC^C_{\ii}(k)\right)^{\T}\bcC^C_{\ii}(k):=\sum_{k=0}^{\nu-1} \bF_N(k)^{\T}\bC^{\T}\bC \bF_N(k).
\end{align*}

For each of the quadratic components, we define the observability mappings 
\begin{align*}
&\bcO_{\p\p}^j(t_1, t_2) := \bB ^{\T}\bF_J(t_1)^{\T}\bM_j \bF_J(t_2),\quad
\bcO_{\p\ii}^j(t, k) := \bB ^{\T}\bF_J(t)^{\T}\bM_j \bF_N(k),\\
&\bcO_{\ii\p}^j(t, k) := \bB ^{\T}\bF_N(k)^{\T}\bM_j \bF_J(t),\qquad\;\,
\bcO_{\ii\ii}^j(\ell, k) := \bB ^{\T}\bF_N(\ell)^{\T}\bM_j \bF_J(t)
\end{align*}
and apply the theory from Section \ref{sec:Gramians} to derive the corresponding observability Gramians
\[
\bcQ_{\p}^{j}:=\int_{0}^{\infty} \bF_J(t)^{\T}\bM_j(\bcP_{\p}+\bcP_{\ii})\bM_j\bF_J(t)\dd t,\quad \bcQ_{\ii}^{j}:=\sum_{k=0}^{\nu-1} \bF_N(k)^{\T}\bM_j(\bcP_{\p}+\bcP_{\ii})\bM_j\bF_N(k)
\]
for $j = 1,\dots, p.$
Finally, we can describe the overall observability behavior using the proper and improper observability Gramians, which we define as
\begin{align}\label{eq:multOut}
\bcQ_{\p}:= \bcQ_{\p}^{C} + \sum_{j=1}^{p} \bcQ_{\p}^{j}, \qquad \bcQ_{\ii}:= \bcQ_{\ii}^{C} + \sum_{j=1}^{p} \bcQ_{\ii}^{j}.
\end{align}

To describe the output energies, we first gather the output mappings defined above to a general proper and improper observability mapping
\[
\bcO_{\p}(k, t_1, t_2):=\begin{bmatrix}
\bcQ_{\p}^{C}(t_1) \\[5pt]
\bcO_{\p\p}^1(t_1, t_2)\\ \vdots \\ 
\bcO_{\p\p}^p(t_1, t_2)\\[5pt]
\bcO^1_{\ii\p}(k, t_2)\\ \vdots \\
\bcO_{\ii\p}^p(k, t_2)
\end{bmatrix} \qquad\text{ and }\qquad
\bcO_{\ii}(\ell, k, t):=\begin{bmatrix}
\bcQ_{\ii}^{C}(\ell) \\[5pt]
\bcO_{\p\ii}^1(\ell, t)\\ \vdots \\ 
\bcO_{\p\ii}^p(\ell, t)\\[5pt]
\bcO^1_{\ii\ii}(\ell, k)\\ \vdots \\
\bcO_{\ii\ii}^p(\ell, k)
\end{bmatrix}.
\]
Together with the derivations from \Cref{ssec:Reduction_input} we obtain the proper and improper output energies
\begin{align*}
E(\bcO_{\p}) = \|\bcO_{\p}\|^2 = \trace{\bcQ_{\p}},\quad
E(\bcO_{\ii}) = \|\bcO_{\ii}\|^2 = \trace{\bcQ_{\ii}},
\end{align*}
where $\bcQ_{\p}$ and $\bcQ_{\ii}$  are as defined in \eqref{eq:multOut}.
These energy expressions justify the truncation process as described in \Cref{ssec:BT} also for the multiple output case.

To estimate the error $\|\by-\widehat{\by}\|_{L_{\infty}}$ in the case of multiple outputs, we estimate the output norm by the sum of the norms of the different components of the output, that is
\begin{align*}
\|\by-\widehat{\by}\|_{L_{\infty}}&\leq \|\by_C-\widehat{\by}_C\|_{L_{\infty}} +  \|\begin{bmatrix}
\by_1-\widehat{\by}_1\\
\vdots\\
\by_{\p}-\widehat{\by}_{\p}
\end{bmatrix}\|_{L_{\infty}}\\
&\leq \|\by_C-\widehat{\by}_C\|_{L_{\infty}} + \|\begin{bmatrix}
\by_1-\widehat{\by}_1\\
0\\
\vdots
\end{bmatrix}\|_{L_{\infty}}+
\dots +  \|\begin{bmatrix}
 \vdots\\
 0\\
 \by_{\p}-\widehat{\by}_{\p}
 \end{bmatrix}\|_{L_{\infty}}\\
 &\leq \|\by_C-\widehat{\by}_C\|_{L_{\infty}} + \|
 \by_1-\widehat{\by}_1
\|_{L_{\infty}}+
 \dots +  \|
\by_{\p}-\widehat{\by}_{\p}
\|_{L_{\infty}}.
\end{align*}
In the first component $\|\by_C-\widehat{\by}_C\|_{L_{\infty}}$, the improper part can be neglected since the improper states are not truncated.
Hence, it holds 
\begin{align*}
\|\by_C-\widehat{\by}_C\|_{L_{\infty}} 
&= \|\bC\bx_{\p}-\widehat{\bC}\widehat{\bx}_{\p}\|_{L_{\infty}}
= \left(\int_{0}^{\infty}\|\mathrm{vec}(\bC\bF_J(t)\bB -\widehat{\bC}\widehat{\bF}_J(t)\widehat{\bB })\|_2^2\dd t\right)^{\frac{1}{2}}
\left(\int_{0}^{\infty}\|\bu(\tau)\|_2^2\dd t\right)^{\frac{1}{2}}\\
&= \mathrm{tr}\left( \bB ^{\T}\bcQ_{\p}\bB \right)\|\bu\|_{L_2}
\end{align*}
For the other summands, we apply the theory presented in \Cref{sec:ErrEst} to obtain the bound
\begin{align*}
\sum_{j=1}^p \|
\by_j-&\widehat{\by}_j
\|_{L_{\infty}}\\
&\leq 
\sum_{j=1}^p \|\by_{\p\p,j}-\widehat{\by}_{\p\p,j}\|_{L_{\infty}} +  \|\by_{\ii\p,j}-\widehat{\by}_{\ii\p,j}\|_{L_{\infty}}+  \|\by_{\p\ii,j}-\widehat{\by}_{\p\ii,j}\|_{L_{\infty}}\\
&=
\sum_{j=1}^p 
\left(\trace{\bcP_{\p}\bM_j\bcP_{\p}\bM_j}-2\trace{\widetilde{\bcP}_{\p}^{\T}\bM_j\widetilde{\bcP_{\p}}\widehat{\bM}_j}+\trace{\widehat{\bcP}_{\p}\widehat{\bM}_j\widehat{\bcP}_{\p}\widehat{\bM}_j}\right)\|\bu\otimes \bu\|_{L_2}\\
&\quad +\Biggl(
\trace{\bcP_{\p}\bM_j\bcP_{\ii}\bM_j} 
-2\trace{\widetilde{\bcP}_{\p}^{\T}\bM_j\widetilde{\bcP}_{\ii}\widehat{\bM}_j}+\trace{\widehat{\bcP}_{\p}\widehat{\bM}_j\widehat{\bcP}_{\ii}\widehat{\bM}_j}
\Biggr)^{\frac{1}{2}} \nu ^{\frac{1}{2}} \|\bu\|_{\bcC^{\nu-1}}\|\bu\|_{L_2}\\
&\quad+ \Biggl(
\trace{\bcP_{\p}\bM_j\bcP_{\ii}\bM_j} 
-2\trace{\widetilde{\bcP}_{\p}^{\T}\bM_j\widetilde{\bcP}_{\ii}\widehat{\bM}_j}+\trace{\widehat{\bcP}_{\p}\widehat{\bM}_j\widehat{\bcP}_{\ii}\widehat{\bM}_j}
\Biggr)^{\frac{1}{2}} \nu ^{\frac{1}{2}} \|\bu\|_{\bcC^{\nu-1}}\|\bu\|_{L_2}.
\end{align*}

\section{Numerical Results}\label{sec:NumRes}
In this section, we discuss the efficiency of the proposed methodology using several examples. 
We also verify our theoretical findings, particularly the error bounds, in our numerical experiments. 
All the numerical experiments are carried out on a computer with 4 Intel Core i5-4690 CPUs running at 3.5 GHz and equipped with 8 GB total main memory. 
The experiments use \matlab R2017a and examples and methods from M-M.E.S.S.-2.1., see \cite{SaaKB21-mmess-2.1}.

\subsection{An illustrative example}

Using this example, we highlight that for quadratic output models, it is necessary to consider mixed Gramians  $\bcQ_{\p\ii}$ and $\bcQ_{\ii\p}$, as discussed in \Cref{sec:Gramians}. 
For this, we consider the following system in Weierstra{\ss} canonical form 
\begin{align*}
\begin{bmatrix}
1 & 0 & 0 & 0 \\
0 & 1 & 0 & 0 \\
0 & 0 & 0 & 1 \\
0 & 0 & 0 & 0 \\
\end{bmatrix}\begin{bmatrix}
\dot{z}_1(t) \\
\dot{z}_2(t) \\
\dot{z}_3(t) \\
\dot{z}_4(t) \\
\end{bmatrix} &= \begin{bmatrix}
-1 & 0 & 0 & 0 \\
0 & -1 & 0 & 0 \\
0 & 0 & 1 & 0 \\
0 & 0 & 0 & 1 \\
\end{bmatrix}\begin{bmatrix}
z_1(t) \\
z_2(t) \\
z_3(t) \\
z_4(t) \\
\end{bmatrix}\begin{bmatrix}
1 \\
1 \\
1 \\
1 \\
\end{bmatrix}\bu(t), \\ 
\by(t) &= \begin{bmatrix}
z_1(t) &
z_2(t) &
z_3(t) &
z_4(t)
\end{bmatrix}\begin{bmatrix}
1 & 0 & 1 & 0 \\
0 & 0 & 0 & 1 \\
1 & 0 & 0 & 0 \\
0 & 1 & 0 & 2 \\
\end{bmatrix}\begin{bmatrix}
z_1(t) \\
z_2(t) \\
z_3(t) \\
z_4(t) \\
\end{bmatrix}.
\end{align*}
The proper state is then given by $\bx_1(t) = \begin{bmatrix}
z_1(t)\\ z_2(t)
\end{bmatrix}$ and the improper one as $\bx_2(t) = \begin{bmatrix}
z_3(t)\\ z_4(t)
\end{bmatrix}$.
The corresponding system Gramians are given as 
\begin{align*}
\bcP_1 = \begin{bmatrix}
\frac{1}{2} & \frac{1}{2} \\ \frac{1}{2} & \frac{1}{2}
\end{bmatrix},\; \;
\bcP_2 = \begin{bmatrix}
2 & 1 \\ 1 & 1
\end{bmatrix}, \; \;
\bcQ_{11} = \begin{bmatrix}
\frac{1}{4} & 0 \\ 0 & 0
\end{bmatrix},\; \;
\bcQ_{21} = \begin{bmatrix}
1 & \frac{1}{2} \\ \frac{1}{2} & \frac{1}{2}
\end{bmatrix},\; \;
\bcQ_{12} = \begin{bmatrix}
\frac{1}{2} & \frac{1}{2} \\ \frac{1}{2} & 1
\end{bmatrix},\;  \;
\bcQ_{22} = \begin{bmatrix}
0 & 0 \\ 0 & 4
\end{bmatrix}.
\end{align*}
We observe that the controllability Gramian of the proper state is of rank one. 
Hence, the minimal realization of the proper part of the system is of rank one, and so is the proper part of the reduced-order model for this example.
The improper state is described by a rank two controllability Gramian and by a rank two observability Gramian that is $\bcQ_{\ii} = \bcQ_{\p\ii} + \bcQ_{\ii\ii}$ so that the minimal realization of the improper system part is of rank two.
However, we observe that the improper-improper observability Gramian is of rank one. 
This fact shows vividly that the mixed Gramians need to be taken into consideration. 

To investigate the quality of the obtained reduced-order system, we consider the system output, which we obtain by applying the input function $\bu(t) =  0.2\cdot e^{-t}$. 
The results are shown in \Cref{fig:WCF}, where the left plot shows the results of the full-order model (\texttt{FOM}), the reduced-order model (\texttt{ROM}), and the corresponding error (\texttt{Error}) when the mixed Gramians are applied in the reduction process.
The right plot shows the same values for the case when the mixed Gramians were not part of the reduction step, i.e., $\bcQ_{\p}:=\bcQ_{\p\p}$ and $\bcQ_{\ii}:=\bcQ_{\ii\ii}$.
\begin{figure}[tb]
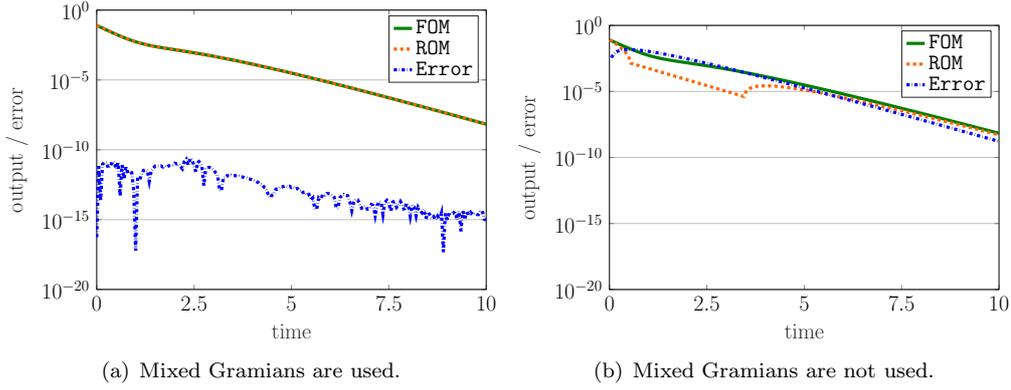

\subfigure[Mixed Gramians are used.]{\input{Index1}}
\subfigure[Mixed Gramians are not used.]{\input{Index1false}}
\caption{An illustrative example: Output responses of the full-order and reduced-order models and the corresponding error.}\label{fig:WCF}
\end{figure}
We note that the mixed observability Gramians $\bcQ_{\p\ii}$ and $\bcQ_{\ii\p}$ must be considered within the reduction process.

\subsection{Index--2 Stokes example}

As the second example, we consider the creeping flow in capillaries or porous media, which can be described by the following equations
\begin{align}\label{Stokes}
\begin{split}
\frac{\mathrm{d}}{\mathrm{d}t}v(\zeta,t) &= \mu\Delta v(\zeta,t) - \nabla p(\zeta,t) +f(\zeta,t),\\
0 &= \mathrm{div}(v(\zeta,t)),
\end{split}
\end{align}
with appropriate initial and boundary conditions. 
The position in the domain $\Omega\subset\mathbb{R}^{d}$ is described by $\zeta\in\Omega$ and $t\geq0$ is the time.
For simplicity, we use a classical solution concept and assume that the external force $f: \Omega \times [0,\infty) \to \mathbb{R}^d$ is continuous and that 
the velocities $v: \Omega \times [0,\infty) \to \mathbb{R}^d$ and pressures $p: \Omega \times [0,\infty) \to \mathbb{R}^d$ satisfy the necessary smoothness conditions.
We discretize the system \eqref{Stokes} by a finite difference scheme as discussed in \cite{morMehS05, morSty06} and have an output equation to measure our quantity of interest.
We choose the matrix $\bM$ to be $0.01\cdot \bI_n$, yielding the $l_2$-norm of the state vector with a scaling factor $0.01$. Consequently, we obtain a discretized system of the form 
\begin{align}\label{Index2}
\begin{split}
\frac{\mathrm{d}}{\mathrm{d}t}\begin{bmatrix}
I & 0 \\
0 & 0
\end{bmatrix}\begin{bmatrix}
z(t)\\
{\lambda}(t)
\end{bmatrix}&=\begin{bmatrix}
A & G \\
G^{\mathrm{T}} & 0
\end{bmatrix}\begin{bmatrix}
z(t) \\
\lambda(t)
\end{bmatrix} + \begin{bmatrix}
B _1\\
B _2
\end{bmatrix}\bu(t),\\
\by(t) &= \begin{bmatrix}
z(t)^{\T} &
\lambda(t)^{\T}
\end{bmatrix}\bM\begin{bmatrix}
z(t) \\
\lambda(t)
\end{bmatrix}
\end{split}
\end{align}
with system matrices $A\in\mathbb{R}^{g\times g}$ and $G\in\mathbb{R}^{g, q}$.
The input matrices are given as $B _1\in\mathbb{R}^{g, m}$, $B _2\in\mathbb{R}^{q, m}$ and the output matrix is $\bM\in\mathbb{R}^{n, n}$ with $n = g+q$.
The state consists of $z(t)\in\mathbb{R}^{g}$ and $\lambda(t)\in\mathbb{R}^{q}$, while the input is $\bu(t)\in\mathbb{R}^{m}$ and the output $\by(t)\in\mathbb{R}$.
We consider the system of dimension $n = 645 = n_v + n_p$, where the dimensions of the velocity and pressure vectors are $n_v = 420$ and $n_p=225$, respectively.

We need to determine the Gramians corresponding to the proper and improper states of the system \eqref{Index2}.
For this purpose, we apply the methods described in \cite{Sty08,morSty06}, noting that the improper Gramians can be computed explicitly.
In Figure \ref{fig:Index2SVD}, we depict the decay of the Hankel singular values $\sigma_1,\, \sigma_2, \dots$ of the proper states corresponding to the proper Gramians as described in Section \ref{ssec:BT}.
We truncate the proper Hankel singular values smaller than $\sigma_1\cdot 10^{-8}$ and truncate the improper Hankel singular values equal to zero.
The reduced-order model has the dimensions $\widehat{n}=\widehat{n}_v + \widehat{n}_p$ with $n_v = 13$ and $\hat{n}_p=2$.
\Cref{fig:Index2} shows the output behavior of the full-order model \eqref{eq:DAE_q} and of the reduced-order model \eqref{eq:redDAE_q} for an input function $\bu(t) =  \mathrm{sin}(t)^3e^{-t/2}$. 
Additionally, the figure includes the output error and the corresponding error estimation.
The actual error is below the estimated error for all time, and we observe that the error bound is rather conservative.
The correct error is sufficiently small, and the approximation quality of the reduced-order systems is much better than the estimated one. 

\begin{figure}
%
%
\definecolor{mycolor1}{rgb}{0.87059,0.49020,0.00000}%
\begin{tikzpicture}[scale=0.3]
\huge
\begin{axis}[%
width=7.589in,
height=3.761in,
at={(1.273in,0.575in)},
scale only axis,
xmin=0,
xmax=30,
xtick={0,5, 10, 15, 20, 25, 30},
xlabel style={font=\color{white!15!black}},
xlabel={$k$},
ymode=log,
ymin=1e-15,
ymax=1,
ytick={1e-15, 1e-10, 1e-5,1e-0},
yminorticks=true,
ylabel style={font=\color{white!15!black}},
ylabel={Hankel singular values $\sigma_k$},
axis background/.style={fill=white},
xmajorgrids,
ymajorgrids,
yminorgrids,
legend style={legend cell align=left, align=left, draw=white!15!black}
]
\addplot [color=mycolor1, line width=4.0pt, mark=diamond, mark options={solid, mycolor1}, forget plot]
  table[row sep=crcr]{%
1	0.00958895243501904\\
2	0.00517510300220882\\
3	0.00102975834886902\\
4	0.000923394365001987\\
5	0.000179434611337845\\
6	0.000113004730796571\\
7	2.43937802112031e-05\\
8	1.6658045903493e-05\\
9	3.21576181005274e-06\\
10	2.58589909581577e-06\\
11	5.01134228779968e-07\\
12	3.41594990957443e-07\\
13	6.75468308736651e-08\\
14	4.88666558496e-08\\
15	9.32052140101513e-09\\
16	7.20180774427551e-09\\
17	1.36521158204146e-09\\
18	9.74155478606235e-10\\
19	1.9317914523185e-10\\
20	1.34017597868412e-10\\
21	2.53052107625101e-11\\
22	1.99739851514888e-11\\
23	3.74669414660914e-12\\
24	2.62417261468859e-12\\
25	5.0077767957232e-13\\
26	3.557266623208e-13\\
27	5.7011867579097e-14\\
28	4.18178112891583e-14\\
29	4.77767045841365e-15\\
30	3.25850866049754e-15\\
};
\end{axis}
\end{tikzpicture}%
	\centering
	\caption{Index--2 Stokes example: the decay of Hankel singular values.}
	\label{fig:Index2SVD}
\end{figure}
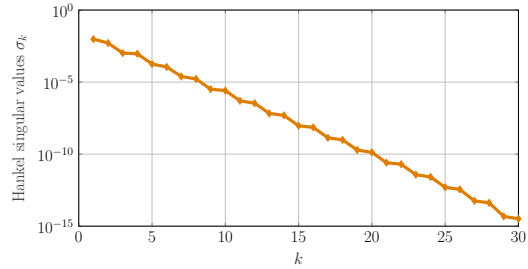

\begin{figure}[tb]
\subfigure[Output]{\input{Index2output}}
\subfigure[Error]{\input{Index2output_err}}
\caption{Index--2 Stokes example: the outputs of the full-order model and the obtained reduced-order model of order $15$ for a test input $\bu(t) = \mathrm{sin}(t)^3e^{-t/2}$.  
The plot also shows the error between the outputs and our error estimate. 
}
\label{fig:Index2}
\end{figure}

\subsection{Index--3 mechanical  system}

Now, we investigate a system of index--3, that results from mechanical systems and is of the form 
\begin{align}\label{Index3}
\begin{split}
\frac{\mathrm{d}}{\mathrm{d}t}\begin{bmatrix}
I_{n_x} & 0 & 0 \\
0 & H & 0 \\
0 & 0  & 0
\end{bmatrix}\begin{bmatrix}
x_1(t)\\
x_2(t)\\
\lambda(t)
\end{bmatrix} &= \begin{bmatrix}
0 & I_{n_x} & 0\\
-K & -D & G\\
G^{\mathrm{T}} & 0 & 0
\end{bmatrix}\begin{bmatrix}
x_1(t)\\
x_2(t)\\
\lambda(t)
\end{bmatrix} + \begin{bmatrix}
0\\
B _{x}\\
0
\end{bmatrix}\bu(t),\\
\by(t) &= \begin{bmatrix}
x_1(t)^{\T} & x_2(t)^{\T} & \lambda(t)^{\T}
\end{bmatrix}\bM\begin{bmatrix}
x_1(t)\\
x_2(t)\\
\lambda(t)
\end{bmatrix},
\end{split}
\end{align}
where $H,~D,~K\in\mathbb{R}^{g\times g}$, $B_x\in\mathbb{R}^{g\times m}$, $G\in\mathbb{R}^{g\times q}$ and $\bM\in\mathbb{R}^{2g+\ell\times 2n+q}$.
The state is given by $x_1(t),~x_2(t)\in\mathbb{R}^{g},~ \lambda(t)\in\mathbb{R}^{g}$, the input by $\bu(t)\in\mathbb{R}^{m}$ and the output by $\by(t)\in\mathbb{R}$.
We consider the index--3 system \eqref{Index3}, which arises in the modeling of constraint mechanical systems with matrices
\begin{align*}
H &= \mathrm{diag}(m_1,\, \ldots,\, m_g),\\
D &=\begin{bmatrix}
d_1 + \delta_1 & -d_1 &  &    \\
-d_1 &   d_1 + d_2+\delta_2 & \hspace{-40pt} -d_2  &     \\
&    \ddots &\ddots & \ddots    \\
&   -d_{g-2} &      d_{g-2}+d_{g-1}+\delta_{g-1}    & -d_{g-1} \\
&   &     -d_{g-1} &  d_{g-1} + \delta_g
\end{bmatrix},\\
K &= 
\begin{bmatrix}
k_1 + \kappa_1 & -k_1 &  &    \\
-k_1 &   k_1 + k_2+\kappa_2 & \hspace{-40pt} -k_2  &     \\
&    \ddots&\ddots & \ddots     \\
&   -k_{g-2} &      k_{g-2}+k_{g-1}+\kappa_{g-1}    & -k_{g-1} \\
&   &     -k_{g-1} &  k_{g-1} + \kappa_g
\end{bmatrix},\\
&
\\[5pt]
\quad G &= [1,\,0,\,\dots,\,0,\,-1]^{\mathrm{T}},\quad
B_x  = [1,\,0,\,\dots,\,0]^{\mathrm{T}}, \quad \bM = \bI_{2g+1}.
\end{align*}
The matrices are generated using the M-M.E.S.S.-function $\mathtt{msd\_ind3}$, see \cite{SaaKB21-mmess-2.1},  with dimension $ g = 600$. We choose 
\begin{align*}
m_1 &= \cdots = m_g = 1, \quad k_1 = \cdots = k_{g-1} = 1.5, \quad d_1 = \cdots = d_{g-1} = 0.7, \\
\kappa_1 &= \cdots =\kappa_g = 2 ,\quad \delta_1 = \cdots = \delta_g = 0.9.
\end{align*}
In \cite{morMehS05}, the projection matrices \eqref{eq:Proj} for this example were introduced.
To compute the Gramians, we follow the same procedure presented in \cite{morSty06,Sty08} modified to the index 3 cases.

\Cref{fig:Index3SVD} depicts the proper Hankel singular values. We truncate those smaller than $\sigma_1\cdot 10^{-8}$. 
Additionally, we remove the improper states corresponding to improper Hankel singular values that are zero.
The resulting reduced dimensions are $\widehat{n}=\widehat{n}_v + \widehat{n}_p$ with $n_v = 20$ and $\hat{n}_p=1$.
The outputs of the full-order and the reduced-order models \eqref{eq:DAE_q} and \eqref{eq:redDAE_q} are described in \Cref{fig:Index3} for an input function $\bu(t) =  \mathrm{sin}(2t)^2e^{-t/2}$ 
and the figure also shows the error between the outputs and the error estimate using \ref{eq:finalErrBound}. 
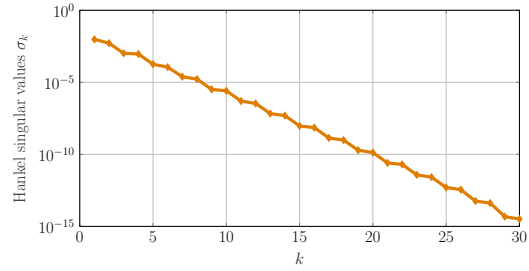
\begin{figure}
%
%
\definecolor{mycolor1}{rgb}{0.87059,0.49020,0.00000}%
\begin{tikzpicture}[scale=0.3]
\huge
\begin{axis}[%
width=7.589in,
height=3.761in,
at={(1.273in,0.575in)},
scale only axis,
xmin=0,
xmax=30,
xtick={0,5, 10, 15, 20, 25, 30},
xlabel style={font=\color{white!15!black}},
xlabel={$k$},
ymode=log,
ymin=1e-15,
ymax=1,
ytick={1e-15, 1e-10, 1e-5,1e-0},
yminorticks=true,
ylabel style={font=\color{white!15!black}},
ylabel={Hankel singular values $\sigma_k$},
axis background/.style={fill=white},
xmajorgrids,
ymajorgrids,
yminorgrids,
legend style={legend cell align=left, align=left, draw=white!15!black}
]
\addplot [color=mycolor1, line width=4.0pt, mark=diamond, mark options={solid, mycolor1}, forget plot]
  table[row sep=crcr]{%
1	0.00958895243501904\\
2	0.00517510300220882\\
3	0.00102975834886902\\
4	0.000923394365001987\\
5	0.000179434611337845\\
6	0.000113004730796571\\
7	2.43937802112031e-05\\
8	1.6658045903493e-05\\
9	3.21576181005274e-06\\
10	2.58589909581577e-06\\
11	5.01134228779968e-07\\
12	3.41594990957443e-07\\
13	6.75468308736651e-08\\
14	4.88666558496e-08\\
15	9.32052140101513e-09\\
16	7.20180774427551e-09\\
17	1.36521158204146e-09\\
18	9.74155478606235e-10\\
19	1.9317914523185e-10\\
20	1.34017597868412e-10\\
21	2.53052107625101e-11\\
22	1.99739851514888e-11\\
23	3.74669414660914e-12\\
24	2.62417261468859e-12\\
25	5.0077767957232e-13\\
26	3.557266623208e-13\\
27	5.7011867579097e-14\\
28	4.18178112891583e-14\\
29	4.77767045841365e-15\\
30	3.25850866049754e-15\\
};
\end{axis}
\end{tikzpicture}%
	\centering
	\caption{Index--3 Mechanical system example: the decay of Hankel singular values.}\label{fig:Index3SVD}
\end{figure}

\begin{figure}[tb]
\subfigure[Output]{\input{Index3output}}
\subfigure[Error]{\input{Index3output_err}}
\caption{Index--3 Mechanical system example: the outputs of the full-order model and the obtained reduced-order model of order $21$ for the input $\bu(t) =  \mathrm{sin}(2t)^2e^{-t/2}$.  
The plot also shows the error between the outputs and our error estimate. 
}\label{fig:Index3}
\end{figure}
Here again, we make similar observations as in the previous example--that is, the error bound is conservative, and the system is approximated much better, leading to an output error that is smaller than $10^{-13}$ for all $t\in[0,\, 10]$.

\section*{Conclusions}

This paper has addressed the model reduction problem for differential-algebraic systems with quadratic output equations using BT.
To apply the balancing procedure, we have derived new observability Gramians that describe the observability behavior of the proper and improper states.
We have shown that these Gramians can be computed by solving certain projected Lyapunov equations. 
Moreover, we have derived some bounds of the energy functional for the proper states that have been used to derive a truncation criterion.
To evaluate the quality of the reduced surrogate model, we have introduced an error estimator.

Our method has been illustrated by numerical examples of indexes one, two, and three. In particular, we were able to derive surrogate models of very small dimensions that approximate the input--to--output behavior of the full-order models very well. 
\bibliographystyle{plainurl} 
\bibliography{mor,References,csc,software}

\end{document}